\let\MYcaption\@makecaption
\let\@makecaption\MYcaption
\newtheorem{ass}{Assumption}
\newtheorem{lemma}{Lemma}
\newtheorem{remark}{Remark}
\newtheorem{proposition}{Proposition}
\newtheorem{definition}{Definition}
\newtheorem{thm}{Theorem}
\newcommand{\bR}{\mathbb{R}}
\newcommand{\cD}{\mathcal{D}}
\newcommand{\cJ}{\mathcal{J}}
\newcommand{\cP}{\mathcal{P}}
\newcommand{\T}{^\top}
\newcommand{\qqed}{\hfill \QED}
\newcommand{\qedstar}{\hfill $\star$}
\begin{document}

\title{On the sensitivity of linear resource sharing problems to the arrival of new agents}

\author{Alessandro~Falsone, \IEEEmembership{Member, IEEE}, Kostas~Margellos \IEEEmembership{Member, IEEE}, Jacopo~Zizzo,\\ Maria~Prandini, \IEEEmembership{Fellow, IEEE}, Simone~Garatti \IEEEmembership{Member, IEEE}%
\thanks{Research was supported by the European Commission under the project UnCoVerCPS, grant number 643921, and by EPSRC UK under
the grant EP/P03277X/1.}
\thanks{Alessandro~Falsone, Jacopo~Zizzo, Maria~Prandini, and Simone~Garatti are with the Dipartimento di Elettronica, Informazione e Bioingegneria, Politecnico di Milano, Via Ponzio 34/5, 20133 Milano, Italy (e-mail: {name.surname}@polimi.it, jacopo.zizzo@mail.polimi.it).}
\thanks{Kostas~Margellos is with the Department of Engineering Science, University of Oxford, Parks Road, Oxford, OX1 3PJ, United Kingdom (e-mail: kostas.margellos@eng.ox.ac.uk).
}
}
\maketitle

\begin{abstract}
	We consider a multi-agent optimal resource sharing problem that is represented by a linear program. The amount of resource to be shared is fixed, and agents belong to a population that is characterized probabilistically so as to allow heterogeneity among the agents. In this paper, we provide a characterization of the probability that the arrival of a new agent affects the resource share of other agents, which means that accommodating the new agent request at the detriment of the other agents allocation provides some payoff. This probability represents a sensitivity index for the optimal solution of a linear programming resource sharing problem when a new agent shows up, and it is of fundamental importance for a correct and profitable operation of the multi-agent system. Our developments build on the equivalence between the resource sharing problem and certain dual reformulations which can be interpreted as scenario programs with the number of scenarios corresponding to the number of agents in the primal problem. The recent ``wait-and-judge''  scenario approach is then used to obtain the sought sensitivity index. Our theoretical findings are demonstrated through a numerical example on optimal cargo aircraft loading.
\end{abstract}

\begin{IEEEkeywords}
	Linear programing, uncertain systems, multi-agent systems, scenario approach, duality theory.
\end{IEEEkeywords}


\section{Introduction}\label{sec:intro}

Systems with multiple agents interacting with each other while sharing common resources are encountered in several applications ranging from power networks \cite{Warrington_etal_2013,dorfer:simpson-porco:bullo:16,dorfler:grammatico:17}, demand side management \cite{Margellos_Oren_2016,Deori_etal_2016}, and social networks
\cite{Ghaderi2014,etesami:basar:15,parsegov:proskurnikov:tempo:friedkin:17}, to consensus and flocking \cite{olfati-saber:murray:04,olfati-saber:06}, as well as robotic and sensor networks \cite{martinez:bullo:cortes:frazzoli:07,sankovic:johansson:stipanovic:12}. 
Determining the optimal resource share has attracted the interest of the control systems community, with most of the research activities focusing towards distributed optimization schemes based on iterative algorithms for determining social welfare maximizing strategies (see \cite{Bertsekas_Tsitsiklis_1997} and references therein, and \cite{nedic:ozdaglar:09,nedic:ozdaglar:parrillo:10,NotBul2011,Zhu_Martinez_2012,BurNotBulAll2012,falsone2017aut,MFGP_2018,falsone2020aut} for recent contributions). Complementary to the problem of distributed computation, albeit equally important, there is the problem of  quantifying the capacity of the system in terms of the number of agents that are needed so as to obtain a solution that remains unaltered upon the arrival of a new agent. 
With the exception of \cite{Margellos_Oren_2016} where such a consideration was made in the context of demand side management, to the best of our knowledge, this issue has not been rigorously investigated. In this paper we aim at addressing this problem, thus offering theoretical support for the developments in \cite{Margellos_Oren_2016}.

We consider multi-agent resource sharing problems that can be represented by linear programs subject to budget equality/inequality constraints, which express the usage of given resources by agents, and local upper-limit constraints, expressing the agents' limits in contributing to the solution. Each agent is characterized by a tuple of parameters encoding the agent contribution to the cost and to the budget type constraints, as well as the upper-limit to its decision vector. Each agent is independently drawn from a fixed, but unknown multivariate probability distribution modeling the underlying unknown mechanism through which agents show up. A multi-extraction from this distribution instantiates a finite population of heterogeneous agents initially participating in the resource sharing problem.

When a new agent corresponding to a new tuple of parameters is added to the pool of agents and the solution is re-computed, it may either happen that the solution changes, in which case the newly arrived agent must contribute to determining it, or the resource sharing solution remains unchanged and the new agent adds to the part of agents that are unemployed. Therefore, the probability that the optimal resource share remains unaltered upon the arrival of a new agent serves as a sensitivity index for the optimal solution of the initial pool of agents.

The goal of this paper is to provide a characterization of this this sensitivity index, i.e., the probability that the arrival of a new agent leaves the optimal resource share unaltered. The main difficulty is that the underlying probability distribution is unknown and to establish our results we build on the equivalence between the resource sharing linear program under consideration and a dual reformulation of the problem. The resulting dual problem exhibits a structure that resembles that of a scenario program, i.e., a program where each constraint corresponds to a different realization of the parameter tuple that models agents' heterogeneity \cite{Calafiore_Campi_2006,Campi_Garatti_2008,Campi_etal_2009}. Since the number of decision variables in this problem grows with the number of scenarios, which makes the standard scenario theory inapplicable, a further transformation is introduced to recast the dual problem as a scenario program with constraint relaxation, \cite{Garatti_Campi_2019,DGP2020}, and by relying on recent ``wait-and-judge'' developments of the scenario approach, \cite{Campi_Garatti_Ramponi_2018,Campi_Garatti_2018,Garatti_Campi_2019}, we obtain a tight quantification of the probability of constraint violation for the dual optimal solution by means of confidence intervals that instantiate \emph{a posteriori} based on the number of active agents in the initial solution. We then show using tools from linear programming (duality and basic solution concepts) that constraint violation in the dual problem is equivalent to a change of solution in the primal resource sharing problem upon the arrival of a new agent, thus eventually obtaining the sought quantification of the sensitivity index.

Preliminary results towards this direction have been reported in \cite{Falsone_etal_CDC2017}.
Here, we extend these developments considerably by allowing also inequality (as opposed to only equality) budget constraints and most importantly local upper-limit constraints to be present in the resource sharing problem. The introduction of such constraints  broadens the class of problems that can be captured by our framework, however, it also imposes certain challenges as it results in the number of decision variables in the dual programming formulations to increase with the number of scenarios (which corresponds to agents in our context). To address this, we deviate from the \emph{a priori} analysis of \cite{Falsone_etal_CDC2017}, and follow a more involved, but at the same time more informative, \emph{a posteriori} route. We also show that, in the absence of upper-limit constraints, we obtain the results of \cite{Falsone_etal_CDC2017} as a special case, and in this case the conclusion of our main theorem can be made \emph{a priori} and identical to the one of \cite{Falsone_etal_CDC2017}.

Our characterization can be profitably exploited in the design and operation of a multi-agent system. 
Indeed, the probability of that the arrival of new agent alters the optimal resource share, can be used to evaluate whether polling new agents in an attempt of improving the current solution is worth pursuing. As a matter of fact, given that in real applications polling new agents can be time consuming and demanding, the aforementioned quantification of the  sensitivity index allows one to assess in probabilistic terms the effort that is needed to find a rewarding agent and decide whether it is affordable or not. Also, it provides a clear indication on the number of agents which should be examined when opting for polling new ones.
The efficacy of our results is illustrated on a cargo aircraft loading case study. In this context, shipping requests of various goods are interpreted as ``agents'' that need to be prioritized to obtain the more rewarding aircraft loading while satisfying the aircraft volume and weight limitations.


The remainder of the paper is structured as follows. Section \ref{sec:prob_state} states the resource sharing program under study. In Section \ref{sec:main_thm} we introduce the proposed characterization of the sensitivity of the solution to the arrival of a new agent and state our main result, whose proof is postponed to Section \ref{sec:max-cap} after the derivation of instrumental results on linear programming theory and duality theory in Section \ref{sec:prelims}. Our developments are demonstrated on a cargo aircraft loading problem in Section \ref{sec:simulations}, while Section \ref{sec:conc} concludes the paper and provides directions for future work.


\section{Problem statement: multi-agent resource sharing problem and sensitivity to the arrival of a new agent} \label{sec:prob_state}

We consider a problem with $m \in \mathbb{N}_+$ agents sharing $p$ resources as follows. Each agent $i$, $i=1,\ldots,m$, is associated with a vector of decision variables $x^i \in \bR^{n^i}$, with possibly $n^i \neq n^j$ for $i \neq j$. For instance, $x^i$ can be the production level of certain goods that need to be produced from some given amounts of shared raw materials. Each decision is subject to a non-negativity constraint $x^i \geq 0$ (inequality is meant component-wise) and also to an upper-limit constraint $x^i \leq d^i$, where inequality is again meant component-wise and $d^i\in \bR^{n^i}$ is a vector of upper limits imposed to the value that can be taken by the components of $x^i$. Moreover, each decision $x^i$ comes with a cost that varies linearly with the value taken by $x_i$ according to $(c^i)\T x^i$, where $c^i \in \bR^{n^i}$. Implementing the decisions requires utilizing some resources. Specifically, there are $p$ resources to be shared among agents, their total amount is indicated by the vector $b \in \bR_+^{p}$ and the consumption of the resources corresponding to $x^i$ is given by $A^i x^i$, where $A^i \in \bR^{p \times n^i}$. 

The total consumption of resources by all agents must not exceed the total availability of resources indicated given $b$, which corresponds to the overall budget-type constraint $\sum_{i=1}^m A^i x^i \leq b$ (inequality is meant component-wise). We also admit that some resources can be required to be entirely consumed by the agents, in which case the corresponding inequalities have to be turned into equalities. In order to have a unified representation of both inequality and equality budget-type constraints, we resort to the standard observation that condition $u \leq w$ is equivalent to $s + u = w$ with $s \geq 0$. Thus, assuming that there are $n^0$, $0 \leq n^0 \leq p$, inequality budget-type constraints, we introduce a vector of slack variables $x^0 \in \bR^{n^0}$, whose elements are positive and not upper limited, and write the overall budget type constraint as $A^0x^0 + \sum_{i=1}^m A^i x^i = b$, where
\begin{equation*}
A^0 = \begin{bmatrix} I_{n^0 \times n^0} \\ 0_{(p-n^0) \times n^0} \end{bmatrix}.
\end{equation*}
This way, the first $n^0$ constraints correspond to inequality budget-type constraints, while the remaining $p-n^0$ to the equality ones.

The resource allocation program $\cP^m$ below instantiates the agents' decision variables so as to minimize the global cost while satisfying the constraints.\footnote{Note that Setting $c^i = -u^i$, $\min \sum (c^i)\T x^i$ can be written as $\max \sum (u^i)\T x^i$ and the problem can be interpreted as that we are maximizing a global utility.}

\begin{align}
	\cP^m: \quad \min_{\substack{x^0 \in \bR^{n^0}, \\ \{x^i \in \bR^{n^i}\}_{i=1}^m}} \quad &\sum_{i=1}^m (c^i)\T x^i \label{eq:opt_program} \\
	\text{subject to:} \quad\: & x^i \geq  0, \; i=0,1,\ldots,m, \nonumber  \\
							& A^0 x^0 + \sum_{i=1}^m A^i x^i = b, \nonumber  \\
							& x^i \leq  d^i, \; i=1,\ldots,m, \nonumber
\end{align}
Letting $\ell = \sum_{i=0}^m n^i$ be the total number of decision variables in $\cP^m$, we define $x = [(x^0)\T \, (x^1)\T \, \dots \, (x^m)\T]\T \in \mathbb{R}^\ell$ as the vector stacking all the agents' decision vectors on top of each other. The optimal solution to $\cP^m$, assuming it exists, is denoted by $x^\star$.

\begin{remark}
	Note that $\cP^m$ in \eqref{eq:opt_program} is not a linear program in standard form, \cite{bertsimas1997introduction}, due to the presence of upper-limit constraints. It could be brought to standard form via the introduction of additional slack variables, \cite[Section 1.1]{bertsimas1997introduction}. 
	However, we prefer to show the upper-limit constraints explicitly as this offers additional insights on our results. \qedstar
\end{remark}

In $\cP^m$ each agent $i$, $i=1,\ldots,m$, is fully characterized by the tuple $\delta^i = (n^i,c^i,d^i,A^i)$. Here, we assume that $\delta^i$, $i = 1,\ldots,m$, is an i.i.d. (independent and identically distributed) sample of a random quantity $\delta = (n,c,d,A)$ taking value in a generic probability space $(\Delta,\mathcal{D},\mathbb{P})$. It should be noted that $\mathbb{P}$ corresponds to the \emph{joint} probability distribution of the elements of $(n,c,d,A)$; in the particular case where all agents have decision vectors of the same length, then the marginal probability of $n$ will be concentrated to that value. Given the i.i.d. assumption, the distribution of the collection $\{\delta^i\}_{i=1}^m$ is given by the product probability measure $\mathbb{P}^m$. Under this setting, $\cP^m$ becomes a random linear program, with the number of agents corresponding to the number of realizations of the uncertain tuple $(n,c,d,A)$ that have instantiated $\cP^m$.

Suppose now that a new agent characterized by $\bar{\delta} = (\bar{n},\bar{c},\bar{d},\bar{A})$ joins the resource sharing problem, and let $\bar{x} \in \mathbb{R}^{\bar{n}}$ denote its corresponding decision vector. The resulting linear program for the $(m+1)$--agent problem is denoted as $\cP^m_+$ and is given by
\begin{align}
	\cP^m_+: \quad \min_{\substack{x^0 \in \bR^{n^0} \\ \{x^i \in \bR^{n^i}\}_{i=1}^m, \bar{x} \in \mathbb{R}^{\bar{n}}}} \quad &\sum_{i=1}^m (c^i)\T x^i + \bar{c}\T \bar{x} \label{eq:opt_program_delta} \\
		\text{subject to:} \qquad\: & x^i \geq 0, \; i=0,1,\ldots,m, \quad\bar{x} \geq 0, \nonumber \\
		& A^0 x^0 + \sum_{i=1}^m  A^i x^i + \bar{A} \bar{x} = b, \nonumber \\
								&x^i \leq d^i, \; i=1,\ldots,m, \quad \bar{x} \leq \bar{d}. \nonumber 
\end{align}
Let $x_+ = [x\T\,\bar{x}\T]\T \in \mathbb{R}^{\ell+\bar{n}}$ be the  vector containing all the decision variables of $\cP^m_+$. The optimal solutions of $\cP^m_+$ is denoted by $x_+^\diamond$. As is clear, two components corresponding to the $m$ previous agents decision vectors and to the new agent decision vector, can be isolated from $x_+^\diamond$, namely $x_+^\diamond = [(x^\diamond)\T\,(\bar{x}^\diamond)\T]\T$, where in general $x^\diamond$ need not coincide with $x^\star$, i.e., the solution to $\cP^m$ with only $m$ agents in place. To be precise, two situations may arise. We can either have that: (a) $\bar{x}^\diamond = 0$, in which case it must be that $x_+^\diamond = [(x^\star)\T\,0\T]$ with no improvement in the cost, because, otherwise, with $x_+^\diamond = [(x^\diamond)\T\,0\T] \neq [(x^\star)\T\,0\T]$, $x^\diamond$ would be a  super-optimal solution to $\cP^m$ in \eqref{eq:opt_program}, or (b) $\bar{x}^\diamond \neq 0$ and $x_+^\diamond \neq [(x^\star)\T\,0\T]$, in which case the optimal value of $\cP^m_+$ improves over that of $\cP^m$ because in any case $[(x^\star)\T\,0\T]$ is feasible for $\cP^m_+$.

For a resource sharing problem with $m$ agents, our objective is to quantify how likely it is that the arrival of a new agent improves the optimal solution achieved by the initial $m$ agents alone. More formally, given that the new agent is characterized by a stochastic tuple $\bar{\delta} =  (\bar{n},\bar{c},\bar{d},\bar{A})$, we are interested in quantifying the probability (with respect to the variability of $\bar{\delta}$) with which $x_+^\diamond \neq (x^\star,0)$, i.e.,  
$$
\mathbb{P}\{ \bar{\delta} =  (\bar{n},\bar{c},\bar{d},\bar{A}) \in \Delta :~ x_+^\diamond \neq (x^\star,0) \},
$$
which serves as a sensitivity index as detailed in the introduction. The main difficulty with the computation of $\mathbb{P}\{ \bar{\delta} =  (\bar{n},\bar{c},\bar{d},\bar{A}) \in \Delta :~ x_+^\diamond \neq (x^\star,0) \}$ lies in the fact that $\mathbb{P}$ is not known ($\mathbb{P}$ models the unknown mechanism through which agents show up). Thus, a direct computation of $\mathbb{P}\{ \bar{\delta}: \; x_+^\diamond \neq (x^\star,0) \}$ is impossible and and we must proceed along a different route as detailed in the next section.


\section{Main result: sensitivity index estimation} \label{sec:main_thm}

To start with, note that the sensitivity index $\mathbb{P}\{ \bar{\delta}: \; x_+^\diamond \neq (x^\star,0) \}$ itself can be considered as a random variable defined over the product probability space $(\Delta^m,\mathcal{D}^m,\mathbb{P}^m)$ because of the dependence of $x^\star$ and $x_+^\diamond$ on the random sample  $\{\delta^i\}_{i=1}^m$ (this dependency is not shown explicitly to ease notation). Theorem \ref{thm:prob_opt} below, which is our main contribution, shows that there always exists a high correlation between $\mathbb{P}\{ \bar{\delta}: \; x_+^\diamond \neq (x^\star,0) \}$ and an observable quantity $s^\star$, which is the number of agents actively participating to the solution to $\cP^m$ in \eqref{eq:opt_program}. Hence, the sensitivity index can be tightly estimated from $s^\star$ with high confidence with respect to the seen $\{\delta^i\}_{i=1}^m$.

Before formally stating the theorem, we need to clarify some notation. In general, a superscript to a vector dictates that it is associated with the corresponding agent (e.g. $x^i$ is the $i$-th agent decision vector), while we use a subscript to denote a particular element in the vector ($x_i$ is the $i$-th elements of $x$). For each $i=1,\ldots,m$, we denote by $\cJ^i \subset \{1,\ldots,\ell\}$ the indices corresponding to the variables in $x$ belonging to agent $i$ and for a given subset $I \subseteq \{1,\ldots,\ell\}$ of indices, $v_I$ denotes the sub-vector of $v$ corresponding to the indices in $I$. Thus, $x_{\cJ^i} = x^i$. Finally, $v_{r:s}$ is a shorthand for $v_{\{r,\ldots,s\}}$.

The derivation of Theorem \ref{thm:prob_opt} requires the following two technical assumptions.

\begin{ass}[Feasibility and uniqueness] \label{ass:feas}
	For any $m \in \mathbb{N}_+$, the linear program $\cP^m$ in \eqref{eq:opt_program} is feasible and admits a unique minimizer almost surely with respect to $\mathbb{P}^m$. \qedstar
\end{ass}
\begin{ass}[Non-degeneracy] \label{ass:nondeg}
	We assume that for any $m \in \mathbb{N}_+$:
	\begin{enumerate}
		\item For all $i=1,\ldots,m$, $d^i > 0$.
		\item At any feasible point for $\cP^m$ in \eqref{eq:opt_program}, no more than $\ell$ constraints are active almost surely.
		\item For any vector $\lambda \in \mathbb{R}^p$, 
	\end{enumerate}
	\begin{align}
	\mathbb{P}\{ \delta =  (n,&c,d,A) \in \Delta:~ \exists j\in \{1,\ldots,n\} \nonumber \\
	&\text{ such that } [c\T + \lambda\T A]_j = 0\} = 0,
	\end{align}
	where $[\,\cdot\,]_j$ denotes the $j$--th element of its argument. \qedstar
\end{ass}
Both Assumptions  \ref{ass:feas} and \ref{ass:nondeg} are standard in linear programming, \cite{bertsimas1997introduction}, and are relatively mild. Assumption~\ref{ass:feas} guarantees that $x^\star$ and $x_+^\diamond$ are almost surely well-defined. Given that the solution is constrained to stay in a box (non-negativity and upper-limit constraints), feasibility requires that the polyhedron defined by the budget-type constraint is almost surely non void and intersecting the box. This is achieved when $\mathbb{P}$ properly limits the variability of the half-spaces/hyper-planes defining the polyhedron. Uniqueness instead requires that the cost level sets are almost surely not aligned to some edge of the feasibility set. For example, this is achieved if the probability of $c^i$ conditional to $A^i$ has density. Note that the uniqueness part of the assumption could be relaxed, by assuming that in case of multiple minimizers a specific one is singled out by means of a linear tie-break rule. All the subsequent derivations can be carried over with no conceptual twists, but they would become cumbersome. For this reason, we prefer to stick to the present formulation of Assumption~\ref{ass:feas}.
Assumption \ref{ass:nondeg} imposes certain non-degeneracy conditions. In particular, part $1$ excludes the case of degenerate agents with some components of $x^i$ being forced to be equal to zero. Condition $2$ implies that $\cP^m$ in \eqref{eq:opt_program} is non-degenerate in the sense of \cite[Definition 2.10]{bertsimas1997introduction} and it is verified if the probability that the hyper-plane defining the budget-type constraint set passes over a given point is zero. Condition $3$ is needed in the proof of Theorem \ref{thm:prob_opt} below when a result from \cite{Garatti_Campi_2019} is invoked. This condition requires that for any given $\lambda \in \mathbb{R}^p$, the probability that $\lambda$ belongs to the boundary of the affine constraints $c\T + \lambda\T A \leq 0$ is zero. In other words, these affine constraints, parameterized by the elements $c$ and $A$ of $\delta$, do not accumulate over the same point at their boundaries with the exception of zero probability cases only. Both conditions $2$ and $3$ are typically verified if $\delta =  (n,c,d,A)$ is generically distributed with no concentrated mass in the marginal distributions of $c$, $d$, and~$A$.

Fix now any $\beta \in (0,1)$ and for $k = 0,1,\ldots,m-1$, consider the following polynomial equations in the variable $t$ (see \cite[Theorem 4]{Garatti_Campi_2019})
\begin{align}
	{m \choose k} t^{m-k} - \frac{\beta}{2m} \sum_{i = k}^{m-1} &{i \choose k} t^{i-k} \nonumber \\
		&- \frac{\beta}{6m} \sum_{i=m+1}^{4m} {i \choose k} t^{i-k} = 0, \label{eq:poly1}
\end{align}
and for $k=m$ consider the polynomial equation
\begin{align}
	1- \frac{\beta}{6m} \sum_{i=m+1}^{4m} {i \choose m} t^{i-m} = 0. \label{eq:poly2}
\end{align}
As shown in \cite{Garatti_Campi_2019}, for any $k = 0,1,\ldots,m-1$, \eqref{eq:poly1} has exactly two solutions denoted as $\underline{t}(k), \overline{t}(k) \in [0,+\infty)$, with $\underline{t}(k) \leq \overline{t}(k)$, while \eqref{eq:poly2} has only one solution denoted by $\overline{t}(m) \in [0,+\infty)$; we also define $\underline{t}(m)=0$. Define then the functions $\underline{\epsilon}(\cdot), \overline{\epsilon}(\cdot): \{0,1,\ldots,m\} \to [0,1]$ as
\begin{align}
	\underline{\epsilon}(k) &= \max\{0,1-\overline{t}(k)\}, \label{eq:eps_min} \\
	\overline{\epsilon}(k) &= \max\{0,1-\underline{t}(k)\},  \label{eq:eps_max}
\end{align}
$ k = 0,1,\ldots,m$.
We are now in a position to state the main result of our paper.
\begin{thm} \label{thm:prob_opt}
	Consider Assumptions \ref{ass:feas} and \ref{ass:nondeg}. Fix $\beta \in (0,1)$, and consider $\underline{\epsilon}(\cdot)$ and $\overline{\epsilon}(\cdot)$ as defined in \eqref{eq:eps_min} and \eqref{eq:eps_max}, respectively. Denote then by $s^\star$ the number of agents whose decision vector has at least one non-zero element, i.e.,
	\begin{align}
		s^\star = \Big| \left\{ i \in \{1,\ldots,m\}:~ \exists j \in \cJ^i \text{ such that } x^\star_j \neq 0 \right\} \Big|, \label{eq:compression}
	\end{align}
	where $|\cdot|$ denotes the cardinality of its argument.
	We then have that 
	\begin{align}
		\mathbb{P}^m \Big\{ & \{\delta^i\}_{i=1}^m \in \Delta^m :~ \label{eq:prob_bounds} \\
			&\mathbb{P} \{ \bar{\delta} \in \Delta:~ x_{+}^\diamond \neq (x^\star,0)\} \in [\underline{\epsilon}(s^\star), \overline{\epsilon}(s^\star)] \Big\} \geq 1-\beta. \nonumber
	\end{align}
\end{thm}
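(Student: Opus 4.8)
The plan is to connect the event $\{\bar\delta:~x_+^\diamond\neq(x^\star,0)\}$ in the primal resource sharing problem to a constraint-violation event in a suitable dual reformulation, and then apply the wait-and-judge scenario bounds of \cite{Garatti_Campi_2019} recalled through \eqref{eq:poly1}--\eqref{eq:eps_max}. First I would dualize $\cP^m$. Associating multipliers $\lambda\in\bR^p$ with the budget-type equality $A^0x^0+\sum_i A^ix^i=b$ and multipliers $\mu^i\geq 0$ with the upper-limit constraints $x^i\leq d^i$, and using the box structure, the Lagrangian dual of $\cP^m$ becomes a linear program in $(\lambda,\{\mu^i\})$ of the form $\max\ b\T\lambda-\sum_i (d^i)\T\mu^i$ subject to, for $i=0$, $(A^0)\T\lambda\leq 0$ (from the free-but-nonnegative slack $x^0$), and for each $i=1,\ldots,m$, $(A^i)\T\lambda-\mu^i\leq c^i$ with $\mu^i\geq 0$, which by eliminating $\mu^i$ componentwise reads $[(A^i)\T\lambda - c^i]_+ \leq \mu^i$, so that at optimality $\mu^i = [(A^i)\T\lambda - c^i]_+$ and the dual collapses to an optimization over $\lambda$ alone with objective $b\T\lambda - \sum_i (d^i)\T[(A^i)\T\lambda-c^i]_+$. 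Each agent thus contributes exactly one ``scenario'' — a piecewise-linear concave term in $\lambda$ — so the dual is a scenario program over the fixed-dimension variable $\lambda\in\bR^p$. The presence of $[\cdot]_+$ means this is naturally a scenario program with constraint relaxation in the sense of \cite{Garatti_Campi_2019,DGP2020}: lifting to $(\lambda,\xi^1,\ldots,\xi^m)$ with $\xi^i\geq 0$, $\xi^i\geq (A^i)\T\lambda-c^i$ and objective $b\T\lambda-\sum_i (d^i)\T\xi^i$, each agent corresponds to one relaxable constraint, while the shared non-relaxed variable $\lambda$ has dimension $p$.

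Next I would identify the compression/support set. The key is to show that the number of ``active'' scenarios in this relaxed scenario program — in the terminology of \cite{Garatti_Campi_2019}, the number of constraints whose removal would change the optimizer, equivalently the number of scenarios receiving a nonzero relaxation $\xi^i>0$ plus those tight with $\xi^i=0$ — coincides, under Assumptions~\ref{ass:feas} and \ref{ass:nondeg}, with $s^\star$ as defined in \eqref{eq:compression}. This is where complementary slackness does the work: by LP duality, $x^i_j>0$ for some $j\in\cJ^i$ forces the corresponding dual inequality for agent $i$ to be tight, i.e. exactly the scenario-$i$ constraint is active; conversely if agent $i$ is entirely unemployed ($x^i=0$) then, generically (this is precisely what part~3 of Assumption~\ref{ass:nondeg} buys, ruling out accidental tightness $[c^i\T+\lambda\T A^i]_j=0$, and part~1, $d^i>0$, prevents the upper-limit constraint from being vacuously active), its dual constraint is slack and the scenario is inactive. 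Non-degeneracy (part~2) ensures the primal/dual basic solutions are non-degenerate so that ``active'' and ``support'' notions agree and the solution is unique as in Assumption~\ref{ass:feas}. Hence the observed number of support scenarios equals $s^\star$, which is the quantity fed into the \emph{a posteriori} bounds.

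Then I would invoke \cite[Theorem 4]{Garatti_Campi_2019}: for the scenario program with relaxation built above, with $\beta\in(0,1)$ fixed, the probability (over $\bar\delta$) that a fresh scenario violates the constraint at the computed optimizer $\lambda^\star$ — i.e. that $[(\bar A)\T\lambda^\star-\bar c]_+\neq 0$, meaning the new agent's dual constraint is violated — lies in $[\underline\epsilon(s^\star),\overline\epsilon(s^\star)]$ with $\mathbb{P}^m$-confidence at least $1-\beta$, where $\underline\epsilon,\overline\epsilon$ are exactly \eqref{eq:eps_min}--\eqref{eq:eps_max}. The final step is to translate this dual violation event back to the primal: I would argue that the new agent's dual constraint $(\bar A)\T\lambda^\star\leq\bar c$ being satisfied is equivalent to $[(x^\star)\T,0\T]$ remaining optimal for $\cP^m_+$ (the old dual solution, padded with $\bar\mu=0$, stays dual-feasible for $\cP^m_+$ and has unchanged objective, hence by weak/strong duality $[(x^\star)\T,0\T]$ is still primal-optimal), whereas violation forces $\bar x^\diamond\neq 0$ and a strict cost improvement, i.e. case (b) of the dichotomy already spelled out before Section~\ref{sec:main_thm}. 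Composing this equivalence with the confidence statement yields \eqref{eq:prob_bounds}.

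The main obstacle I anticipate is the support-set identification step: carefully proving that the number of active scenarios in the relaxed dual equals $s^\star$ — not $s^\star$ plus some contribution from tight-but-unemployed agents or from the $n^0$ slack/equality block — requires a precise accounting via complementary slackness and basic-solution arguments, and it is exactly where each clause of Assumption~\ref{ass:nondeg} (positivity $d^i>0$, non-degeneracy of $\cP^m$, and the no-accumulation condition on $c\T+\lambda\T A$) must be used to exclude the measure-zero pathologies. A secondary technical point is verifying that the lifted dual genuinely fits the structural hypotheses of \cite[Theorem 4]{Garatti_Campi_2019} (convexity, the non-relaxed variable $\lambda$ of fixed dimension $p$, one relaxable constraint per scenario, and an appropriate non-degeneracy/uniqueness condition for the scenario program), which is presumably the role of the instrumental LP and duality results promised for Section~\ref{sec:prelims}.
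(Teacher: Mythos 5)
Your proposal follows essentially the same route as the paper: dualize $\cP^m$, eliminate the upper-limit multipliers to obtain a scenario program with constraint relaxation in the fixed-dimension variable $\lambda\in\bR^p$ (your collapsed objective $b\T\lambda-\sum_i(d^i)\T[(A^i)\T\lambda-c^i]_+$ is, up to a sign convention on $\lambda$, exactly the paper's $\widetilde{\cD}^m$ with $h^i=(d^i)\T[\cdot]_+$), identify via complementary slackness that the relaxed-plus-tight scenarios are precisely the $s^\star$ agents with a nonzero component, invoke the wait-and-judge bound of Garatti--Campi, and translate dual constraint violation back to non-optimality of $(x^\star,0)$ for $\cP^m_+$ via reduced costs. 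The two points you flag as obstacles (the support-set accounting and the fit to the relaxation framework) are indeed where the paper spends its effort, through Propositions \ref{prop:ext_basic}--\ref{prop:unique_dual} and Lemmas \ref{lemma:dual_equiv} and \ref{rem:slack_nondeg}.
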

\begin{proof}
	The proof of Theorem \ref{thm:prob_opt} is deferred to Section \ref{sec:max-cap}, after that some preliminary results based on linear programming and duality theory are derived in Section \ref{sec:prelims}.
\end{proof}

In words, Theorem \ref{thm:prob_opt} says that irrespective of $\mathbb{P}$ -- i.e., irrespective of the agents distribution -- the probability that the optimal solution $x^\star$ of $\cP^m$ in \eqref{eq:opt_program} changes upon the arrival of a new agent lies within the interval $[\underline{\epsilon}(s^\star), \overline{\epsilon}(s^\star)]$ with confidence at least $1-\beta$. The quantity $s^\star$ the interval depends on is itself a random variable, since it depends on the random sample $\{\delta^i\}_{i=1}^m$, but, differently from $\mathbb{P} \{ \bar{\delta} \in \Delta:~ x_{+}^\diamond \neq (x^\star,0)\}$, it is an observable one since $s^\star$ is a-posteriori known from a direct inspection of $x^\star$. The essential message conveyed by Theorem \ref{thm:prob_opt} is that the observable $[\underline{\epsilon}(s^\star), \overline{\epsilon}(s^\star)]$ always provides a correct quantification (with confidence $1-\beta$) of the sought but unknown quantity $\mathbb{P} \{ \bar{\delta} \in \Delta:~ x_{+}^\diamond \neq (x^\star,0)\}$.
\begin{figure*}[h!]
	\centering
	\includegraphics[width=0.95\textwidth]{./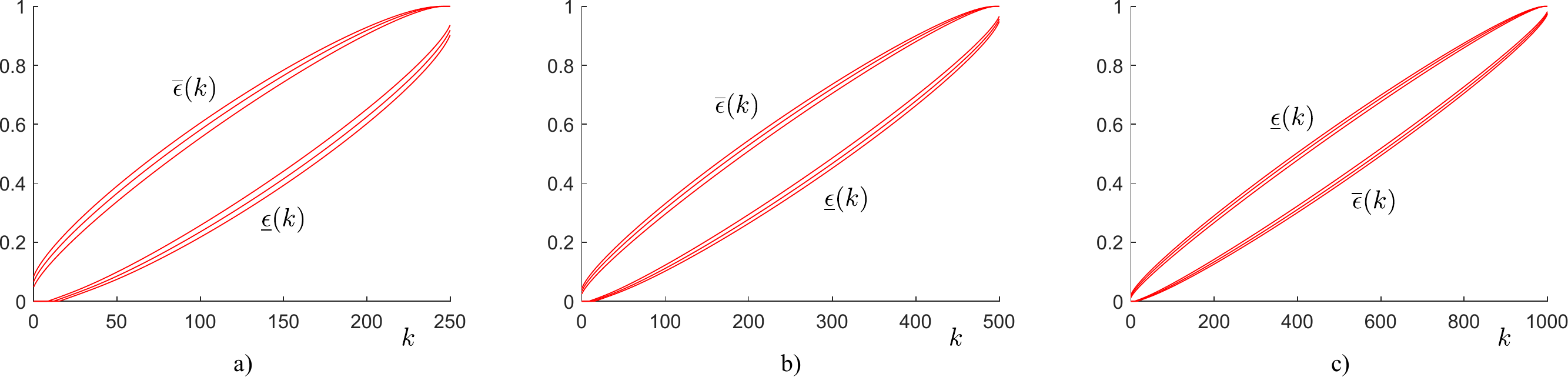}
	\caption{$\underline{\epsilon}(k)$ and $\overline{\epsilon}(k)$ for $\beta = 10^{-4},10^{-6},10^{-8}$ and: a) $m=250$; b) $m=500$; c) $m=1000$.}
	\label{fig:epsLU}
\end{figure*}
This quantification is often significant and tight, because, as shown in \cite{Garatti_Campi_2019} and \cite{Campi_Garatti_2020}, $\underline{\epsilon}(k)$ and $\overline{\epsilon}(k)$ rapidly get close each other as $m$ increases, while their value is barely affected by $\beta$ (provably, the dependence is logarithmic, see \cite{Campi_Garatti_2020}), so that very small values like $\beta = 10^{-6}$ or $\beta = 10^{-8}$ can be enforced to obtain that $\mathbb{P} \{ \bar{\delta} \in \Delta :~ x_{+}^\diamond \neq (x^\star,0)\} \in [\underline{\epsilon}(s^\star), \overline{\epsilon}(s^\star)]$ with practical certainty. Figure \ref{fig:epsLU} depicts $\underline{\epsilon}(k)$ and $\overline{\epsilon}(k)$ for $\beta = 10^{-4},10^{-6},10^{-8}$ and $m = 250, 500, 100$. As it appears, the margin between $\underline{\epsilon}(k)$ and $\overline{\epsilon}(k)$ only moderately increases as $\beta$ decreases.

To compute $ \underline{\epsilon}(k)$ and $ \overline{\epsilon}(k)$ a bisection numerical algorithm can be used, see \cite[Appendix~A]{Garatti_Campi_2019}. For the case where one is only interested in the upper-bound of $\mathbb{P} \{ \delta \in \Delta:~ x_{+}^\diamond \neq (x^\star,0)\}$, the slightly tighter expression provided in \cite[Theorem 2]{Campi_Garatti_2018} could be employed; note that this still depends on the solution of a given polynomial equation. Alternatively, one could use the upper-bound $\overline{\epsilon}(\cdot)$ provided in \cite[Theorem 1]{Campi_Garatti_Ramponi_2018}, which is loose as compared to the $\overline{\epsilon}(\cdot)$ given in Theorem \ref{thm:prob_opt}, but it admits the explicit expression $\overline{\epsilon}(k) = 1- \sqrt[m-k]{\frac{\beta}{m {m \choose k}}}$, for all $k=1,\ldots,m-1$, and $\overline{\epsilon}(m) =1$.

\begin{remark}
	It is perhaps worth comparing Theorem \ref{thm:prob_opt} with the result of \cite{Falsone_etal_CDC2017}. In \cite{Falsone_etal_CDC2017} a version of problem \eqref{eq:opt_program} where no local upper-limit constraints and no inequality budget-type constraint are present is considered and in that setup it is proven that 
	\begin{align}
	\mathbb{P}^m \Big\{ & \{\delta^i\}_{i=1}^m \in \Delta^m :~ \label{eq:prob_bounds_OLD} \\
	&\mathbb{P} \{ \bar{\delta} \in \Delta:~ x_{+}^\diamond \neq (x^\star,0)\} \leq \epsilon  \Big\} \geq 1-\beta, \nonumber
	\end{align}
	where $\epsilon$ is a threshold that can be computed from $m$ and $\beta$ and that is provably slightly smaller than $\overline{\epsilon}(p)$ ($p$ is the number of budget-type constraints). The existence of the lower bound $\underline{\epsilon}(s^\star)$ in \eqref{eq:prob_bounds} as well as the fact that there are problems where $s^\star$ takes values greater than $p$ (see the numerical example in Section \ref{sec:simulations}) disproves that a result like \eqref{eq:prob_bounds_OLD} can apply in the more general setup of the present paper. We will instead show later, in Remark \ref{rml:rapprochment} after the proof of Theorem \ref{thm:prob_opt}, how \eqref{eq:prob_bounds_OLD} can be obtained in the more limited setup of \cite{Falsone_etal_CDC2017} from the theory of the present paper, so showing that the results of \cite{Falsone_etal_CDC2017} are indeed specific cases of those of the present contribution. \qedstar  
\end{remark}



\section{Preliminary results} \label{sec:prelims}

\subsection{Preliminary results based on linear programming}
\color{black}
Consider the random program $\cP^m$ in \eqref{eq:opt_program} and let $A = [A^0\, A^1\,\cdots\,A^m] \in \mathbb{R}^{p \times \ell}$ and $c = [0_{1 \times n^0}\; (c^1)\T\,\dots\,(c^m)\T]\T \in \mathbb{R}^\ell$. Also, for the sake of having a compact notation, formally define $d = [(d^0)\T\, (d^1)\T\,\dots\,(d^m)\T]\T \in \mathbb R_\ast^\ell$, where $d^0$ is a vector of $n^0$ extended real variables all taking value $+\infty$ ($\mathbb R_\ast$ is the set of extended real numbers).
We are interested in the case where $\ell \geq m > p$, i.e., $\cP^m$ has more decision variables and agents than budget-type coupling constraints, as it is typically the case in resource sharing problems.

We start by recalling some basic facts about the geometry of linear programs. The constraints of $\cP^m$ in \eqref{eq:opt_program} define a feasibility domain $Q = \{ x :~ Ax = b, x \geq 0, x_{n^0+1:\ell} \leq d_{n^0+1:\ell} \} \subseteq \bR^{\ell}$ which, under Assumption \ref{ass:feas}, is almost surely a non-empty polytope. The solution $x^\star$ to $\cP^m$, which almost surely exists and is unique, must occur at a vertex of $Q$ by the definition of a polytope vertex, see e.g., \cite[Definition 2.7]{bertsimas1997introduction}. Moreover, by \cite[Theorem 2.3]{bertsimas1997introduction} any vertex of $Q$ is a so-called basic feasible solution, and vice-versa, according to the following definition. 

\begin{definition}\label{def:basic_sol}
	For any $m \in \mathbb{N}_+$, $\hat{x} \in \mathbb{R}^\ell$ is said to be a basic solution associated with $\cP^m$ in \eqref{eq:opt_program} if $A\hat{x} = b$ and out of the constraints of $\cP^m$ that are active at $\hat{x}$ there are $\ell$ of them that are linearly independent. $\hat{x}$ is a basic feasible solution of $\cP^m$ if in addition $\hat{x}$ is feasible for $\cP^m$.
\end{definition}

Basic solutions are at the core of linear programming; however, most results refer to linear programs in standard form, where upper-limit constraints are not present. Next we provide a characterization of basic feasible solutions in the present setup, which will be used then to obtain a characterization of $x^\star$ that is essential for our proof of Theorem \ref{thm:prob_opt}.

\subsubsection{Characterization of basic solutions} \label{sec:linprog}

Proposition \ref{prop:ext_basic} below extends \cite[Theorem 2.4]{bertsimas1997introduction} while accounting for the presence of upper-limit constraints. Interestingly, the pursuit of such a characterization was posed as an exercise in \cite[Exercise 2.3]{bertsimas1997introduction}, but no solution is reported.

\begin{proposition} \label{prop:ext_basic}
	If $A$ is full row-rank, a vector $\hat{x} \in \mathbb{R}^\ell$ is an extended basic solution if and only if $A\hat{x} = b$, and there exists a set $B = \{j_1,\ldots,j_p\} \subset\{1,\dots,\ell\}$ of indices with $|B| = p$ (i.e., its cardinality equals the number of rows of $A$) such that:
	\begin{enumerate}
		\item the columns $A_{j}$, $j \in B$, of $A$, are linearly independent;
		\item if $j\notin B$, then either $\hat{x}_j = 0$ or $\hat{x}_j = d_j$, where $\hat{x}_j$, $d_j$ denote the $j$-th element of $\hat{x}$ and $d$, respectively.
	\end{enumerate}
	
	\begin{proof}
		$(\Longleftarrow):$ Consider a vector $\hat{x}$ satisfying $A\hat{x}=b$, and conditions \emph{(1)} and \emph{(2)} in the statement of the proposition.
		Since $A\hat{x}=b$ is one of the conditions in the definition of an extended basic solution, it remains to show that $\ell$ linearly independent constraints of $\cP^m$ in \eqref{eq:opt_program} are active at $\hat{x}$. 
		To this end, let $B = \{j_1,\ldots,j_p\} \subset\{1,\dots,\ell\}$ with $|B| = p$, be the set of indices such that the columns $A_{j}$, $j \in B$, are linearly independent. For $j \notin B$, $\hat{x}_j = 0$ or $\hat{x}_j = d_j$, i.e., for the indices not in $B$ either the non-negativity constraint or the upper-limit constraint is active.
		Consider now the following system of $\ell$ linear equations in the $\ell$ elements of a vector $x$, namely, 
		\begin{align}
		\sum_{j \in B} A_j x_j = b - \sum_{j \notin B} A_j \hat{x}_j \text{ and } x_j = \hat{x}_j, \text{ for } j\notin B. \label{eq:lin_sys1}
		\end{align}
		Since the columns $A_{j}$, $j \in B$, of $A$, are linearly independent by condition \emph{(1)}, and the row-rank of $A$ is $p=|B|$, the above system of equations admits a unique solution, which must be $\hat{x}$ since $\hat{x}$ surely satisfies \eqref{eq:lin_sys1}. By \cite[Theorem 2.2]{bertsimas1997introduction}, this is equivalent to the fact that the $\ell$ equations in \eqref{eq:lin_sys1} are linearly independent, which in turn means that there exist $\ell$ constraints active at $\hat{x}$ that are linearly independent. This shows that $\hat{x}$ is a basic solution associated with $\cP^m$, and concludes the sufficiency part of the proof.
		
		$(\Longrightarrow):$ Let $\hat{x}$ be a basic solution associated with $\cP^m$ in \eqref{eq:opt_program}. We then have that 
		$A\hat{x}=b$ and that $\ell$ linearly independent constraints of $\cP^m$ are active at $\hat{x}$. Let $B_k = \{j_1,\ldots,j_k\} \subset\{1,\dots,\ell\}$ be the set of indices such that $\hat{x}_j \neq 0$ and $\hat{x}_j \neq d_j$, $j \in B_k$. 
		Notice that $k \leq p$; otherwise, if $k>p$, then $p+(\ell-k) < \ell$ constraints (the $p$ budget-type equality constraints and $\ell-k$ among non-negativity and upper-limit constraints) would be active at $\hat{x}$, which violates the fact that $\hat{x}$ is assumed to be a basic solution.
		
		Consider now the following system of $\ell$ linear equations in the $\ell$ elements of a vector $x$, which is similar to \eqref{eq:lin_sys1} with $B_k$ in place of $B$ though:
		\begin{equation}
			\sum_{j \in B_k} A_j x_j = b - \sum_{j \notin B_k} A_j \hat{x}_j \text{ and } x_j = \hat{x}_j, \text{ for } j\notin B_k. \label{eq:lin_sys2}
		\end{equation}
		The fact that $\hat{x}$ is a basic solution is equivalent to having $\ell$ equations among those in \eqref{eq:lin_sys2} that are linearly independent. As a result, and since the $\ell-k$ equations $x_j = \hat{x}_j$  are plainly linearly independent, there should exist at least $k$ equations from $\sum_{j \in B_k} A_j x_j = b - \sum_{j \notin B_k} A_j \hat{x}_j$ that are also linearly independent. This in turn implies that the columns $A_{j}$, $j \in B_k$, of $A$, are linearly independent.
		
		Since the row-rank of $A$ is equal to $p$ and $p\geq k$, we can always amend $p-k$ additional independent columns of $A$ to $A_{j}$, $j \in B_k$. Define the resulting set of indices by $B$, and notice that $|B| = p$; this shows condition \emph{(1)} in the statement of the proposition. Notice also that for all $j \notin B_k$, $\hat{x}_j = 0$ or $\hat{x}_j = d_j$. Since $B_k\subseteq B$, this is also the case for all $j \notin B$. This shows condition \emph{(2)} in the proposition statement and concludes the necessity part of the proof.
	\end{proof}
\end{proposition}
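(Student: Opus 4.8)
The plan is to prove the two implications directly from Definition~\ref{def:basic_sol}, reducing each to the rank of a single square system of $\ell$ linear equations and invoking the classical fact (\cite[Theorem~2.2]{bertsimas1997introduction}) that such a system has a unique solution if and only if its defining equations are linearly independent. Thus the whole argument is a bookkeeping of which constraints are active at $\hat x$ and how the corresponding coefficient vectors interact with the columns of $A$.

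For sufficiency ($\Longleftarrow$), suppose $A\hat x=b$ together with a set $B$, $|B|=p$, having linearly independent columns $A_j$ ($j\in B$) and $\hat x_j\in\{0,d_j\}$ for $j\notin B$. Since $A\hat x=b$ is already part of the definition of a basic solution, it remains only to exhibit $\ell$ linearly independent active constraints at $\hat x$. I would collect the $p$ budget-type equalities together with the $\ell-p$ box constraints (either $x_j\ge 0$ or $x_j\le d_j$) that are active at the indices $j\notin B$, and assemble them into the square system $\sum_{j\in B}A_jx_j=b-\sum_{j\notin B}A_j\hat x_j$, $x_j=\hat x_j$ for $j\notin B$, in the $\ell$ unknowns $x$. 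This system is satisfied by $\hat x$, and after reordering its coefficient matrix is block triangular with an invertible $p\times p$ block $A_B$ (the columns of $A$ indexed by $B$) and an $(\ell-p)\times(\ell-p)$ identity block; hence it has a unique solution, and by \cite[Theorem~2.2]{bertsimas1997introduction} its $\ell$ equations are linearly independent, so $\hat x$ is a basic solution.

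For necessity ($\Longrightarrow$), starting from a basic solution $\hat x$, I would set $B_k=\{j:\hat x_j\ne 0\text{ and }\hat x_j\ne d_j\}$, the set of coordinates strictly interior to their box, with $k=|B_k|$. The first step is the counting bound $k\le p$: at such a point the only constraints that can be active are the $p$ budget equalities and the $\ell-k$ box constraints on the remaining coordinates, and $p+(\ell-k)<\ell$ whenever $k>p$, contradicting that $\ell$ linearly independent constraints are active. Next, forming the analogous square system with $B_k$ in place of $B$, the $\ell-k$ equations $x_j=\hat x_j$ ($j\notin B_k$) are trivially linearly independent, so in order to have $\ell$ linearly independent equations at least $k$ of the rows $\sum_{j\in B_k}A_jx_j=\dots$ must be linearly independent, which is precisely the statement that the columns $A_j$ ($j\in B_k$) are linearly independent. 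Finally, since $A$ has row rank $p\ge k$, I can augment $B_k$ with $p-k$ further indices of independent columns to obtain $B\supseteq B_k$ with $|B|=p$ satisfying condition~(1); condition~(2) follows at once because $\{j\notin B\}\subseteq\{j\notin B_k\}$ and $\hat x_j\in\{0,d_j\}$ for every $j\notin B_k$ by the definition of $B_k$.

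I expect the necessity direction to be the main obstacle, specifically the step that ``peels off'' the $\ell-k$ trivially independent box equations from the full collection of $\ell$ independent active constraints in order to isolate exactly $k$ independent budget rows and translate this into linear independence of the columns indexed by $B_k$. The rank and counting inequalities, and the reordering that exposes the block-triangular structure of the coefficient matrix, also need some care, but they are routine once the systems \eqref{eq:lin_sys1}--\eqref{eq:lin_sys2} are written down.
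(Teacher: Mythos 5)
Your proposal is correct and follows essentially the same route as the paper's own proof: the same square system of $\ell$ equations for sufficiency (with the block-triangular coefficient matrix and the appeal to \cite[Theorem~2.2]{bertsimas1997introduction}), and for necessity the same set $B_k$ of strictly interior coordinates, the same counting bound $k\le p$, the peeling-off of the $\ell-k$ trivial equations, and the augmentation to a full set $B$ of $p$ independent columns. No gaps to report.
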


The following lemma shows that $A$ is almost surely full row-rank in the present setup.

\begin{lemma} \label{lem:A_full_row_rank}
	Consider Assumption \ref{ass:feas} and Assumption \ref{ass:nondeg} (part 2). Then, matrix $A$ is almost surely full row-rank.
	
	\begin{proof}
		Under Assumption \ref{ass:feas}, the optimal solution $x^\star$ corresponds almost surely to a basic feasible solution, that is, there are $\ell$ active constraints at $x^\star$ that are linearly independent. On the other hand, by Assumption \ref{ass:nondeg}, part 2, the number of active constraints at $x^\star$ is exactly $\ell$, and therefore the active constraints must be all linearly independent. The budget constraints $Ax = b$ are clearly active at $x^\star$, hence, this implies that the rows of A are linearly independent, i.e., $A$ is full row-rank.
	\end{proof}
\end{lemma}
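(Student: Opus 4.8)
The plan is to read off the rank of $A$ from the geometry of the optimal vertex. First I would invoke Assumption~\ref{ass:feas}: almost surely $\cP^m$ is feasible with a unique minimizer $x^\star$, and (as already noted above the lemma) the feasible set $Q$ is almost surely a nonempty polytope. Since $\cP^m$ minimizes a linear objective over a polytope, the minimum is attained at a vertex of $Q$; uniqueness then forces $x^\star$ itself to be that vertex. By \cite[Theorem 2.3]{bertsimas1997introduction}, $x^\star$ is therefore a basic feasible solution, i.e.\ there are $\ell$ linearly independent constraints of $\cP^m$ active at $x^\star$.

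Next I would bring in Assumption~\ref{ass:nondeg}, part~2: at any feasible point --- in particular at $x^\star$ --- no more than $\ell$ constraints are active almost surely. Since a basic feasible solution needs at least $\ell$ (linearly independent) active constraints, the number of active constraints at $x^\star$ is exactly $\ell$, and consequently all of them are linearly independent. Finally, the $p$ budget-type equality constraints $Ax=b$ are active at every feasible point, hence in particular at $x^\star$, so they sit inside this linearly independent family of size $\ell$. Their constraint normals are precisely the $p$ rows of $A$; being a subset of a linearly independent set, these rows are linearly independent, and since $p<\ell$ in the regime considered this means $A$ is full row-rank.

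The argument is short and mostly bookkeeping; the only delicate point is the use of Assumption~\ref{ass:nondeg}, part~2. Dropping it would allow a degenerate optimal vertex, at which a rank-deficient $A$ could coexist with extra active box constraints that ``make up'' the missing rank --- so this assumption is exactly what guarantees that the always-active equality rows belong to a linearly independent active family of full size $\ell$. I would make sure the write-up states this explicitly rather than glossing over it.
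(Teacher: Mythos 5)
Your proposal is correct and follows essentially the same route as the paper's proof: pass from uniqueness of the optimum to a basic feasible solution with $\ell$ linearly independent active constraints, use Assumption~\ref{ass:nondeg} (part 2) to conclude that the active set has exactly $\ell$ elements and is therefore entirely linearly independent, and then observe that the always-active rows of $A$ form a subset of this independent family. Your closing remark on why part 2 of the non-degeneracy assumption is indispensable is a useful clarification, but the argument itself is the same.
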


In Proposition \ref{prop:ext_basic}, it is not excluded 
that $\hat{x}_j = 0$ or $\hat{x}_j = d_j$ for some $j \in B$. The following lemma shows that this is not possible almost surely in the present seup.

\begin{lemma} \label{rem:nonzero_basic}
	Under the non-degeneracy Assumption \ref{ass:nondeg} (part $2$), it holds almost surely that for any basic (feasible) solution $\hat{x}_j \neq 0$ and $\hat{x}_j \neq d_j$ for all $j \in B$, where $B$ are the indices satisfying property (1) in Proposition \ref{prop:ext_basic}.
	
	\begin{proof}
		In the opposite case, there would be at least one index $\tilde{j} \in B$ such that $\hat{x}_{\tilde{j}} = 0$ or $\hat{x}_{\tilde{j}} = d_{\tilde{j}}$ with non-zero probability, which would imply that there are $1+p+(\ell-p) = \ell+1$ constraints active at $\hat{x}$ (these are: either the non-negativity or the upper-limit constraint corresponding to $\tilde{j}$ (depending on if $\hat{x}_{\tilde{j}} = 0$ or $\hat{x}_{\tilde{j}} = d_{\tilde{j}}$); the $p$ budget-type constraints; and $\ell-p$ non-negativity and upper-limit constraints corresponding to indices $j \notin B$ -- see \emph{(2)} in Proposition \ref{prop:ext_basic}). This establishes a contradiction, since under the non-degeneracy condition of part $2$ at most $\ell$ constraints are active at $\hat{x}$ with probability one.
	\end{proof}
\end{lemma}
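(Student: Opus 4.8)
The plan is to prove the lemma by a constraint-counting argument, arguing by contradiction against part~2 of Assumption~\ref{ass:nondeg}. First I would record that, since Assumption~\ref{ass:feas} is in force together with Assumption~\ref{ass:nondeg}(2), Lemma~\ref{lem:A_full_row_rank} applies and $A$ is almost surely full row-rank; hence on a probability-one event Proposition~\ref{prop:ext_basic} is available and every basic feasible solution $\hat{x}$ of $\cP^m$ comes equipped with an index set $B \subset \{1,\dots,\ell\}$, $|B| = p$, with linearly independent columns $A_j$, $j \in B$, and with $\hat{x}_j \in \{0, d_j\}$ for every $j \notin B$. I would then consider the complementary ``bad'' event: there exist a basic feasible solution $\hat{x}$ and an index $\tilde{j} \in B$ with $\hat{x}_{\tilde{j}} = 0$ or $\hat{x}_{\tilde{j}} = d_{\tilde{j}}$, and show it is null.

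The heart of the argument is to count the constraints of $\cP^m$ that are active at such an $\hat{x}$. The $p$ budget-type rows $Ax = b$ are active by feasibility. For each of the $\ell - p$ indices $j \notin B$, property~(2) of Proposition~\ref{prop:ext_basic} supplies one active bound constraint ($x_j \ge 0$ if $\hat{x}_j = 0$, or $x_j \le d_j$ if $\hat{x}_j = d_j$). The offending index $\tilde{j} \in B$ contributes one further active bound constraint. Since $\ell > p$ in our setting, these are $p + (\ell - p) + 1 = \ell + 1$ pairwise distinct constraints, all active at a feasible point. This contradicts Assumption~\ref{ass:nondeg}(2), which states that at any feasible point at most $\ell$ constraints are active almost surely. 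Therefore the bad event is contained in the probability-zero event ruled out by Assumption~\ref{ass:nondeg}(2), and the lemma follows.

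I do not anticipate a serious obstacle; the only points requiring a little care are bookkeeping. One is to confirm that ``active constraint'' is counted consistently — the equality rows $Ax=b$ always count, while the box constraints $x_j \ge 0$ and $x_j \le d_j$ count only when tight — so that the tally $\ell+1$ is legitimate and the clash with Assumption~\ref{ass:nondeg}(2) is exact. The other is the quantifier ``for any basic feasible solution'': because Assumption~\ref{ass:nondeg}(2) already bounds the number of active constraints at \emph{every} feasible point, not merely at basic ones, the bad event for an arbitrary (possibly sample-dependent) collection of basic feasible solutions is still contained in that single null event, so no separate union bound over the finitely many basic solutions is needed.
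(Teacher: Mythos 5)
Your proposal is correct and follows essentially the same route as the paper: the identical counting argument that an offending index $\tilde{j}\in B$ would yield $1+p+(\ell-p)=\ell+1$ active constraints at a feasible point, contradicting part~2 of Assumption~\ref{ass:nondeg}. The extra bookkeeping you add (invoking Lemma~\ref{lem:A_full_row_rank} so that Proposition~\ref{prop:ext_basic} applies, and noting that no union bound over basic solutions is needed) is sound and only makes explicit what the paper leaves implicit.
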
 

By Proposition \ref{prop:ext_basic}, Lemma \ref{lem:A_full_row_rank}, and Lemma \ref{rem:nonzero_basic}, almost surely with respect to $\mathbb{P}^m$, any basic (feasible) solution $\hat{x}$ of $\cP^m$ in \eqref{eq:opt_program} determines a partition of itself into three sub-vectors $\hat{x}_{B}$, $\hat{x}_{\underline{N}}$, and $\hat{x}_{\overline{N}}$. Vector $\hat{x}_{B}$ is a stacked vector containing the  $\hat{x}_j$ with $j \in B$, while $\hat{x}_{\underline{N}}$, $\hat{x}_{\overline{N}}$ contain the elements with the remaining indices, which are in turn partitioned in the sets $\underline{N}$, $\overline{N}$, respectively, such that $\hat{x}_j = 0$ for $j \in \underline{N}$, and $\hat{x}_j = d_j$, for $j \in \overline{N}$. 
The elements of $\hat{x}_{B}$ are referred to as basic variables, while the elements of $\hat{x}_{\underline{N}}$ and $\hat{x}_{\overline{N}}$ are collectively referred to as non-basic variables. It should be noted that basic and non-basic variables refer to variables and not agents: for the same agent some variables could be basic while some other ones non-basic. Also for the slack decision vector $\hat{x}^0$ some variables may be basic while some other non-basic. However, in this case, non-basic variables must correspond to indices in $\underline{N}$, since for $j=1,\ldots,n^0$ it cannot be $\hat{x}_j = d_j$ ($x^0$ is only required to be no smaller than $0$ and $d^0$ has been artificially defined as an extended vector with all elements equal to $+\infty$).

\subsubsection{Optimality conditions} \label{sec:opt_cond}

Corresponding to the partition of a basic (feasible) solution $\hat{x}$ in basic and non-basic variables, denote by $A_{B} = [A_{{j_1}}\,\cdots\,A_{{j_p}}]$ the matrix obtained by the columns of $A$ corresponding to the indices in $B$, and by $A_{\underline{N}}$ and $A_{\overline{N}}$ the matrices obtained by considering the columns of $A$
with indices corresponding to the ones of the elements comprising $\hat{x}_{\underline{N}}$, and $\hat{x}_{\overline{N}}$, respectively. Similarly, let $c_{B}$, $c_{\underline{N}}$, and $c_{\overline{N}}$ be the associated partition of $c$.

We then have the following theorem, which constitutes an extension of \cite[Theorem 3.1]{bertsimas1997introduction} to the case where upper-limit constraints are present.

\begin{proposition} \label{prop:opt_cond}
	Consider Assumptions \ref{ass:feas} and \ref{ass:nondeg} (parts $1$ and $2$). For any $m \in \mathbb{N}_+$, and almost surely with respect to $\mathbb{P}^m$, a basic feasible solution $\hat{x}$ is the optimal solution $x^\star$ of $\cP^m$ in \eqref{eq:opt_program} if and only if 
	\begin{align}
		&c_{\underline{N}}^\top - c_B^\top A_B^{-1} A_{\underline{N}} \geq 0, \label{eq:opt_cond1} \\
		&c_{\overline{N}}^\top - c_B^\top A_B^{-1} A_{\overline{N}} \leq 0 \label{eq:opt_cond2},
	\end{align}
	$B,\underline{N},\overline{N}$ being the partition into basic and non-basic variables determined by $\hat{x}$.
	
	\begin{proof}
		Under Assumptions \ref{ass:feas} and \ref{ass:nondeg}, $x^\star$ and $Q$, as well as the partition $B,\underline{N},\overline{N}$ for any basic solution, are well defined almost surely with respect to $\mathbb{P}^m$, so all the subsequent developments hold $\mathbb{P}^m$-almost surely as well. For any given basic feasible solution (vertex) $\hat{x}$ of $Q$, consider a feasible point $x \in Q$, and let $z = x - \hat{x}$. Moreover, let $x_{B}$, $x_{\underline{N}}$, and $x_{\overline{N}}$ and  $z_{B}$, $z_{\underline{N}}$, and $z_{\overline{N}}$ denote the partitions of $x$ and $z$ into sub-vectors corresponding to the indices of basic and non-basic variables of $\hat{x}$.
		
		Since $x$ and $\hat{x}$ are both feasible solutions, $A\hat{x} = b = Ax$, and as a result $A z = A (x - \hat{x}) = 0$. This is in turn equivalent to $A_{B} z_B + A_{\underline{N}} z_{\underline{N}} + A_{\overline{N}} z_{\overline{N}}= 0$, or in other words, recalling that $A_B$ must be non-singular by Proposition \ref{prop:ext_basic}, 
		\begin{align}
			z_{B} = -A_{B}^{-1} (A_{\underline{N}} z_{\underline{N}} + A_{\overline{N}} z_{\overline{N}}). \label{eq:z_B}
		\end{align}
		Consider now the cost function increment $c\T z$ when moving from $\hat{x}$ to $x$. We then have that
		\begin{align}
			c\T z &= c_B\T z_B + c_{\underline{N}}\T z_{\underline{N}} + c_{\overline{N}}\T z_{\overline{N}} \nonumber \\
				&= (c_{\underline{N}}\T-c_B\T A_B^{-1} A_{\underline{N}})(x_{\underline{N}} - \hat{x}_{\underline{N}}) \nonumber \\
					&~~~~+ (c_{\overline{N}}\T-c_B\T A_B^{-1} A_{\overline{N}})(x_{\overline{N}} - \hat{x}_{\overline{N}}), \label{eq:cost_incr}
		\end{align}
		where the second equality follows upon substituting \eqref{eq:z_B}, and by the definition of $z$.
		
		$(\Longleftarrow):$ Notice that $(x_{\underline{N}} - \hat{x}_{\underline{N}}) \geq 0$ and $(x_{\overline{N}} - \hat{x}_{\overline{N}}) \leq 0$ for any $x\in Q$, since all elements of $\hat{x}_{\underline{N}}$ are equal to zero, while all elements of $\hat{x}_{\overline{N}}$ are equal to the upper-limit constraint.
		Therefore, if \eqref{eq:opt_cond1} and \eqref{eq:opt_cond2} are satisfied, it follows from \eqref{eq:cost_incr} that $c\T z = c\T (x - \hat{x}) \geq 0$, i.e., the cost deteriorates ($c\T x \geq c\T \hat{x}$) if we move from $\hat{x}$ to $x$. Since this holds for any $x \in Q$, this implies that $\hat{x}$ is equal to $x^\star$, the unique (under Assumption \ref{ass:feas}) optimal solution of $\cP^m$.
		
		$(\Longrightarrow):$ Assume now that $\hat{x}$ is the unique (under Assumption \ref{ass:feas}) optimal solution $x^\star$ of $\cP^m$. This in turn implies that $c\T z = c\T (x - \hat{x}) \geq 0$ for any $x \in Q$. For the sake of contradiction assume that either \eqref{eq:opt_cond1} or \eqref{eq:opt_cond2} does not hold, i.e., either $[c_{\underline{N}}^\top - c_B^\top A_B^{-1} A_{\underline{N}}]_{\tilde{j}} < 0$ or $[c_{\overline{N}}^\top - c_B^\top A_B^{-1} A_{\overline{N}}]_{\tilde{j}} > 0$ for some $\tilde{j} \in \underline{N}$ or $\tilde{j} \in \overline{N}$, respectively ($[\,\cdot\,]_{\tilde{j}}$ denotes the $\tilde{j}$-th element of the argument).
		
		Suppose that $[c_{\underline{N}}^\top - c_B^\top A_B^{-1} A_{\underline{N}}]_{\tilde{j}} < 0$.
		
		Notice that from the feasibility of $\hat{x}$ we have that $A_B \hat{x}_B + A_{\underline{N}} \hat{x}_{\underline{N}} + A_{\overline{N}} \hat{x}_{\overline{N}} = b$, which in turn, recalling that $A_B$ is non-singular, gives $\hat{x}_B = A_{B}^{-1} b - A_{B}^{-1} A_{\underline{N}} \hat{x}_{\underline{N}} - A_{B}^{-1} A_{\overline{N}} \hat{x}_{\overline{N}}$. We next define a new vector $\tilde{x}$ with the associated partitioning $\tilde{x}_{\underline{N}}$, $\tilde{x}_{\overline{N}}$, and $\tilde{x}_B$ (notice that $B,\underline{N},\overline{N}$ is still the indices partitioning associated to $\hat{x}$).
		
		For all $j \in \underline{N},\overline{N}$ take $\tilde{x}_j = \hat{x}_j$ if $j \neq \tilde{j}$, while let $\tilde{x}_{\tilde{j}} = \mu$, where $\mu \in (0,d_{\tilde{j}})$ is an arbitrary  parameter that can be always selected in view of part 1 of Assumption \ref{ass:nondeg} and also because we defined $d^0$ as an extended vector whose elements are all $+\infty$. In other words, $\tilde{x}_{\overline{N}}$ is identical to $\hat{x}_{\overline{N}}$, while $\tilde{x}_{\underline{N}}$ is identical to $\hat{x}_{\underline{N}}$ except for the $\tilde{j}$-th element, which is taken equal to $\mu$. Eventually, define
		\begin{equation} \label{eq:xo_B}
			\tilde{x}_B = A_{B}^{-1} b - A_{B}^{-1} A_{\underline{N}} \tilde{x}_{\underline{N}} - A_{B}^{-1} A_{\overline{N}} \tilde{x}_{\overline{N}}.
		\end{equation}
		As is clear, \eqref{eq:xo_B} is equivalent to $A \tilde{x} = A_{B} \tilde{x}_B + A_{\underline{N}} \tilde{x}_{\underline{N}} + A_{\overline{N}} \tilde{x}_{\overline{N}} = b$, i.e. $\tilde{x}$ satisfies the budget constraint. Moreover, from the very definition of $\tilde{x}_{\underline{N}}$ and $\tilde{x}_{\overline{N}}$, we have that (remember that $\hat{x}_{\underline{N}} = 0$)
		\begin{align*}
			\tilde{x}_B & = A_{B}^{-1} b - A_{B}^{-1} A_{\underline{N}} \hat{x}_{\underline{N}} - A_{B}^{-1} A_{\overline{N}} \hat{x}_{\overline{N}} \\
					&~~~~- A_{B}^{-1} A_{\underline{N}} \cdot [ 0 \cdots \mu \cdots 0]\T \\
				& = \hat{x}_B - A_{B}^{-1} A_{\underline{N}} \cdot [ 0 \cdots \mu \cdots 0]\T
		\end{align*}
		By Lemma \ref{rem:nonzero_basic}, $\hat{x}_j \in (0,d_j)$, for all $j\in B$. Therefore, since $\tilde{x}_B$ is continuous in $\mu$, for $\mu > 0$ small enough we can ensure that $\tilde{x}_j \in (0,d_j)$ for all $j \in B$, while $\tilde{x}_j \in [0,d_j]$ for all $j \in \underline{N},\overline{N}$ by the very definition of $\tilde{x}$ (and clearly $\tilde{x}_j \in [0,+\infty)$ when $j \in \{1,\ldots,n^0\}$) This means that, besides the budget-type constraint, $\tilde{x}$ also satisfies the non-negativity and the upper-limit constraints of $\cP^m$, that is, $\tilde{x}$ is feasible for $\cP^m$ in \eqref{eq:opt_program}. Recalling \eqref{eq:cost_incr}, and from the definition of $\tilde{x}$, we have that 
		\begin{align*}
			c\T (\tilde{x} - \hat{x}) & = (c_{\underline{N}}\T-c_B\T A_B^{-1} A_{\underline{N}})(\tilde{x}_{\underline{N}} - \hat{x}_{\underline{N}}) \\
					&~~~~+ (c_{\overline{N}}\T-c_B\T A_B^{-1} A_{\overline{N}})(\tilde{x}_{\overline{N}} - \hat{x}_{\overline{N}}) \\
				& = [c_{\underline{N}}\T-c_B\T A_B^{-1} A_{\underline{N}}]_{\tilde{j}} \mu.
		\end{align*}
		Given that $\mu > 0$, assuming $[c_{\underline{N}}\T-c_B\T A_B^{-1} A_{\underline{N}}]_{\tilde{j}} < 0$ would give $c\T (\tilde{x} - \hat{x}) < 0$, which contradicts the optimality of $\hat{x}$. 
		
		As for the case $[c_{\overline{N}}^\top - c_B^\top A_B^{-1} A_{\overline{N}}]_{\tilde{j}} > 0$, a contradiction can be established following a symmetric argument by defining $\tilde{x}_{\tilde{j}} = d_{\tilde{j}} -\mu$ in place of $\tilde{x}_{\tilde{j}} = \mu$.
		
		This concludes the necessity part of the proof.
	\end{proof}
\end{proposition}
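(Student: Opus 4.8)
The plan is to reproduce the classical reduced-cost optimality argument for linear programs, adapted to the presence of finite upper bounds. Throughout I would reason $\bP^m$-almost surely, so that under Assumptions~\ref{ass:feas} and \ref{ass:nondeg} the optimal solution $x^\star$, the polytope $Q$, and the partition $B,\underline N,\overline N$ attached to any basic feasible solution are all well defined, and $A_B$ is invertible by Proposition~\ref{prop:ext_basic} together with Lemma~\ref{lem:A_full_row_rank}. The central objects are the reduced-cost vectors $c_{\underline N}\T - c_B\T A_B^{-1}A_{\underline N}$ on the nonbasic-at-lower-bound block and $c_{\overline N}\T - c_B\T A_B^{-1}A_{\overline N}$ on the nonbasic-at-upper-bound block, which \eqref{eq:opt_cond1}--\eqref{eq:opt_cond2} constrain in sign.

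First I would derive a formula for the cost variation along an arbitrary feasible direction. Given a basic feasible solution $\hat x$ and any $x\in Q$, set $z=x-\hat x$; since $Ax=b=A\hat x$, the block form of $Az=0$ yields $z_B = -A_B^{-1}(A_{\underline N}z_{\underline N}+A_{\overline N}z_{\overline N})$, and substituting into $c\T z = c_B\T z_B + c_{\underline N}\T z_{\underline N} + c_{\overline N}\T z_{\overline N}$ gives
\[
c\T z = (c_{\underline N}\T - c_B\T A_B^{-1}A_{\underline N})z_{\underline N} + (c_{\overline N}\T - c_B\T A_B^{-1}A_{\overline N})z_{\overline N}.
\]
Because $\hat x_{\underline N}=0$ and $\hat x_{\overline N}=d_{\overline N}$ while $x$ obeys the box $x\ge 0$, $x_j\le d_j$, one has $z_{\underline N}\ge 0$ and $z_{\overline N}\le 0$.

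The sufficiency direction is then immediate: if the two sign conditions hold, each summand above is nonnegative, so $c\T x\ge c\T\hat x$ for every $x\in Q$, and uniqueness (Assumption~\ref{ass:feas}) forces $\hat x=x^\star$. For necessity I would argue by contraposition: supposing, say, that $[c_{\underline N}\T - c_B\T A_B^{-1}A_{\underline N}]_{\tilde j}<0$ for some $\tilde j\in\underline N$ (the violation on the $\overline N$ block being handled symmetrically by decreasing a coordinate sitting at its upper bound), I would construct an improving feasible point $\tilde x$ by keeping all nonbasic coordinates fixed except raising $\tilde x_{\tilde j}$ from $0$ to a small $\mu>0$ — admissible since $d_{\tilde j}>0$ by part~1 of Assumption~\ref{ass:nondeg}, or $d_{\tilde j}=+\infty$ when $\tilde j$ indexes a slack variable — and setting $\tilde x_B = A_B^{-1}b - A_B^{-1}A_{\underline N}\tilde x_{\underline N} - A_B^{-1}A_{\overline N}\tilde x_{\overline N}$ to preserve $A\tilde x=b$. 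The cost-variation formula then gives $c\T(\tilde x-\hat x)=[c_{\underline N}\T - c_B\T A_B^{-1}A_{\underline N}]_{\tilde j}\,\mu<0$, contradicting optimality of $\hat x=x^\star$.

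The main obstacle is verifying that this $\tilde x$ is genuinely feasible for some positive $\mu$: unlike in standard-form LP, a perturbation step could in principle be blocked immediately by an upper-limit constraint on a basic variable. Here Lemma~\ref{rem:nonzero_basic} is exactly what is needed, since it ensures $\hat x_j\in(0,d_j)$ strictly for every $j\in B$; as $\tilde x_B$ depends continuously on $\mu$ with $\tilde x_B\to\hat x_B$ as $\mu\to0$, a sufficiently small $\mu$ keeps every basic coordinate strictly inside its box, so $\tilde x\in Q$. That continuity-plus-strict-interiority step is the only nontrivial point; everything else is bookkeeping.
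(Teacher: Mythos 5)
Your proposal is correct and follows essentially the same route as the paper's own proof: the same block cost-increment identity, the same sign argument for sufficiency, and the same $\mu$-perturbation of a nonbasic coordinate for necessity, with Lemma~\ref{rem:nonzero_basic} invoked in exactly the same way to keep the basic variables strictly inside their box. No gaps.
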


It should be noted that the left-hand sides of \eqref{eq:opt_cond1} and \eqref{eq:opt_cond2} are referred to as reduced cost vectors in the linear programming literature \cite{bertsimas1997introduction}. Note also that in the absence of the non-degeneracy conditions of Assumption \ref{ass:nondeg}, \eqref{eq:opt_cond1} and \eqref{eq:opt_cond2}, are only sufficient for a basic feasible solution to be optimal.

\subsection{Preliminary results based on duality analysis} \label{sec:duals}

Consider the dual program associated with $\cP^m$ in \eqref{eq:opt_program}
\begin{align}
	\cD^m: \max_{\substack{\lambda \in \mathbb{R}^p,\\ \{\nu^i \in \mathbb{R}^{n^i}\}_{i=1}^m}} & \quad -\lambda \T b - \sum_{i=1}^m (\nu^i)\T d^i \label{eq:dual1} \\
		\text{subject to:} 	& \quad \lambda\T A^0 \geq 0 \nonumber \\
		& \quad -(c^i)\T -\lambda\T A^i  \leq (\nu^i)\T, ~\forall i=1,\ldots,m, \nonumber \\
		& \quad \nu^i \geq 0, ~\forall i=1,\ldots,m, \nonumber 
\end{align}
where $\lambda$ and $\nu^i$, $i=1,\ldots,m$, denote the dual variables associated with the budget-type constraint and the upper-limit constraints, respectively. Note that the slack variables in $x^0$ are subject to non-negativity constraints only and, therefore, there are no dual variables $\nu^0$ associated to $x_0$.

We also consider in the following an alternative dual program corresponding to $\cP^m$, which is directly in the format considered in \cite{Garatti_Campi_2019}, on which some of our probabilistic developments are based. This corresponds to dualizing only the budget-type constraint, thus maintaining the optimization with respect to $x^i$ subject to the non-negativity and the upper-limit constraints in the definition of the constraints of the resulting dual program: 
\begin{align}
	\widetilde{\cD}^m: \max_{\substack{\lambda \in \mathbb{R}^p,\\ \{h^i \in \mathbb{R}\}_{i=1}^m}} & \quad -\lambda \T b - \sum_{i=1}^m h^i \label{eq:dual2} \\
		\text{subject to:} & \quad \lambda\T A^0  \geq 0 \nonumber \\
		& \quad \max_{0 \leq x^i \leq d^i} ( - (c^i)\T -\lambda\T A^i  ) x^i \leq h^i, \nonumber \\
		& \quad \forall i=1,\ldots,m.\nonumber
\end{align}
We show next that $\cD^m$ and $\widetilde{\cD}^m$ are strictly related each other.
\begin{lemma} \label{lemma:dual_equiv}
	If $(\lambda^\star, \{\nu^{i,\star}\}_{i=1}^m)$ is an optimal dual solution for $\cD^m$ in \eqref{eq:dual1}, then $(\lambda^\star, \{h^{i,\star}\}_{i=1}^m)$ with $h^{i,\star} = (\nu^{i,\star})\T d^i$, $i=1,\ldots,m$ is an optimal dual solution for $\widetilde{\cD}^m$ in \eqref{eq:dual2}.
	
	\begin{proof}
		Consider $\widetilde{\cD}^m$, and notice that the maximization with respect to $0 \leq x^i \leq d^i$ in the constraints can be performed analytically, since the maximum is always attained at an extreme point. In formulas, for each $i=1,\ldots,m$, the constraint $\max_{0 \leq x^i \leq d^i} ( - (c^i)\T -\lambda\T A^i  ) x^i \leq h^i$ in $\widetilde{\cD}^m$ is equivalent to
		\begin{align}
			\max \{0, - (c^i)\T -\lambda\T A^i \} d^i \leq h^i, \label{eq:dual2_con}
		\end{align}
		where the $\max$ in \eqref{eq:dual2_con} is to be understood component-wise. Introduce an additional decision vector $\nu^i$ such that $(\nu^i)\T = \max \{0,  - (c^i)\T -\lambda\T A^i \}$, for all $i=1,\ldots,m$. Given that $d^i > 0$ for all $i=1,\ldots,m$, problem $\widetilde{\cD}^m$ becomes then equivalent to
		\begin{align}
			\max_{\substack{\lambda \in \mathbb{R}^p,\\ \{h^i \in \mathbb{R},\nu^i \in \mathbb{R}^{n^i}\}_{i=1}^m}} & \quad -\lambda \T b - \sum_{i=1}^m h^i \label{eq:dual3} \\
				\text{subject to:} \quad\: & \quad \lambda\T A^0 \geq 0 \nonumber \\
				& \quad - (c^i)\T -\lambda\T A^i  \leq (\nu^i)\T, ~\forall i=1,\ldots,m, \nonumber \\
				& \quad \nu^i \geq 0, ~\forall i=1,\ldots,m, \nonumber \\
				& \quad (\nu^i)\T d^i \leq h^i, ~\forall i=1,\ldots,m, \nonumber 
		\end{align}
		where the second and third set of constraints follow from the definition of $\nu^i$, $i=1,\ldots,m$, while the first and the fourth follow from \eqref{eq:dual2} and \eqref{eq:dual2_con}.
		
		Notice now that \eqref{eq:dual3} admits an additional interpretation. It could be thought of as the
		epigraphic reformulation of $\cD^m$, replacing the second term in its objective function with $-\sum_{i=1}^m h^i$, together with the additional epigraphic constraints $(\nu^i)\T d^i \leq h^i$. 
		
		Overall, we have that $\widetilde{\cD}^m \equiv \eqref{eq:dual3} \equiv \cD^m$. Equivalence is in the sense that $(\lambda^\star,\{h^{i,\star},\nu^{i,\star}\}_{i=1}^m)$ being an optimal solution pair for \eqref{eq:dual3}, is equivalent to $(\lambda^\star, \{\nu^{i,\star}\}_{i=1}^m)$ being optimal for $\cD^m$, and $(\lambda^\star, \{h^{i,\star}\}_{i=1}^m)$ being optimal for $\widetilde{\cD}^m$. Notice that at the optimal solution $(h^{i,\star},\nu^{i,\star})$ the third set of constraints in \eqref{eq:dual3} will hold with equality. Hence, we have that $h^{i,\star} = (\nu^{i,\star})\T d^i$, $i=1,\ldots,m$, thus concluding the proof.
	\end{proof}
\end{lemma}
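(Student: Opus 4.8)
The plan is to prove the lemma by exhibiting a chain of equivalences that passes through an intermediate dual program, namely program \eqref{eq:dual3}, in which both the scalars $h^i$ and the vectors $\nu^i$ appear. The first step is to carry out, for fixed $\lambda$ and each $i$, the inner maximization $\max_{0 \le x^i \le d^i}\,(-(c^i)\T - \lambda\T A^i)\,x^i$ that defines the constraints of $\widetilde{\cD}^m$: it is a linear program over a box, hence attained at a vertex and separable across components (coordinate $j$ contributes $d^i_j$ times its coefficient when that coefficient is positive, and $0$ otherwise), so its value is $\max\{0,\,-(c^i)\T - \lambda\T A^i\}\,d^i$ with the $\max$ read component-wise. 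This turns the $i$-th constraint of $\widetilde{\cD}^m$ into $\max\{0,\,-(c^i)\T - \lambda\T A^i\}\,d^i \le h^i$, that is, into the form \eqref{eq:dual2_con}. Throughout, part~1 of Assumption~\ref{ass:nondeg} (so that $d^i > 0$) and Assumption~\ref{ass:feas} (so that all the programs involved are well posed) are used.

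Second, I would introduce for each $i$ a vector $\nu^i$ defined by $(\nu^i)\T = \max\{0,\,-(c^i)\T - \lambda\T A^i\}$ and rewrite the constraint above as the system $-(c^i)\T - \lambda\T A^i \le (\nu^i)\T$, $\nu^i \ge 0$, and $(\nu^i)\T d^i \le h^i$; collecting these over $i$ and retaining $\lambda\T A^0 \ge 0$ produces program \eqref{eq:dual3} in the variables $(\lambda,\{h^i,\nu^i\}_{i=1}^m)$. The delicate point, and the one I expect to be the main obstacle, is that $-(c^i)\T - \lambda\T A^i \le (\nu^i)\T$ together with $\nu^i \ge 0$ only enforce $(\nu^i)\T \ge \max\{0,\,-(c^i)\T - \lambda\T A^i\}$, an inequality rather than the defining equality, so one has to check that this relaxation does not raise the optimal value. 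It does not, because the objective $-\lambda\T b - \sum_i h^i$ is non-increasing in each $h^i$, each $h^i$ is bounded below only by $(\nu^i)\T d^i$, and, since $d^i > 0$, the quantity $(\nu^i)\T d^i$ is component-wise increasing in $\nu^i$; hence at any optimum of \eqref{eq:dual3} one necessarily has $h^i = (\nu^i)\T d^i$ with $\nu^i$ at its least admissible value, so all the relaxed inequalities are active there. Consequently $\widetilde{\cD}^m$ and \eqref{eq:dual3} share the same optimal value, with optimizers related by $h^{i,\star} = (\nu^{i,\star})\T d^i$.

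Third, I would recognise \eqref{eq:dual3} as precisely the epigraph reformulation of $\cD^m$ in \eqref{eq:dual1}: one introduces epigraph variables $h^i$ for the summands $(\nu^i)\T d^i$ of the objective of $\cD^m$, appends the constraints $(\nu^i)\T d^i \le h^i$, and writes $-\sum_i h^i$ in place of $-\sum_i (\nu^i)\T d^i$; by the tightness established above this alters neither the optimal value nor the optimizing pair $(\lambda,\{\nu^i\}_{i=1}^m)$. Reading the resulting chain $\widetilde{\cD}^m \equiv \eqref{eq:dual3} \equiv \cD^m$ from both ends then gives: $(\lambda^\star,\{\nu^{i,\star}\}_{i=1}^m)$ optimal for $\cD^m$ if and only if $(\lambda^\star,\{h^{i,\star},\nu^{i,\star}\}_{i=1}^m)$ with $h^{i,\star} = (\nu^{i,\star})\T d^i$ optimal for \eqref{eq:dual3}, if and only if $(\lambda^\star,\{h^{i,\star}\}_{i=1}^m)$ optimal for $\widetilde{\cD}^m$, which is exactly the assertion. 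A leaner route avoiding the intermediate program would instead verify directly that $(\lambda^\star,\{h^{i,\star}\}_{i=1}^m)$ with $h^{i,\star} = (\nu^{i,\star})\T d^i$ is feasible for $\widetilde{\cD}^m$ --- the constraint $(\lambda^\star)\T A^0 \ge 0$ is inherited, and feasibility of $(\lambda^\star,\{\nu^{i,\star}\}_{i=1}^m)$ for $\cD^m$ together with $d^i > 0$ yields $h^{i,\star} = (\nu^{i,\star})\T d^i \ge \max_{0\le x^i\le d^i}\,(-(c^i)\T - (\lambda^\star)\T A^i)\,x^i$ --- and then invoke strong linear programming duality (under Assumption~\ref{ass:feas}, $\cP^m$ in \eqref{eq:opt_program}, $\cD^m$ and $\widetilde{\cD}^m$ all share a common optimal value) to conclude that its objective value $-(\lambda^\star)\T b - \sum_i (\nu^{i,\star})\T d^i$ already attains that optimum.
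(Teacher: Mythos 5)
Your proposal is correct and follows essentially the same route as the paper: evaluate the inner box-constrained maximization at a vertex to obtain \eqref{eq:dual2_con}, introduce $\nu^i$ to pass through the intermediate program \eqref{eq:dual3}, and identify the latter as the epigraphic reformulation of $\cD^m$. Your explicit argument that the inequalities $(\nu^i)\T \geq \max\{0,-(c^i)\T-\lambda\T A^i\}$ and $(\nu^i)\T d^i \leq h^i$ must be tight at any optimum (because the objective is decreasing in $h^i$ and $d^i>0$) makes precise a step the paper only asserts, and is a welcome addition rather than a deviation.
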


Consider now the primal program $\cP^m$ in \eqref{eq:opt_program}. Besides the non-negativity constraints $x\geq 0$, the budget-type constraint and the upper-limit constraints can be compactly written as $Ax=b$ and $x\in [0,d]$ provided that for the first $n^0$ elements $x_j \leq d_j = +\infty$ is interpreted as $x_j < +\infty$. Similarly, for the dual program $\cD^m$ in \eqref{eq:dual1}, if we define $\nu = [0_{n^0}\T \, (\nu^1)\T \, \ldots \, (\nu^m)\T]\T$, the constraints are cumulatively given by $\nu \geq 0$ and $-c\T - \lambda\T A \leq \nu\T$ and $\nu \geq 0$ (remember that also $c_j = 0$ for $j=1,\ldots,n^0$).
Let $(x^\star,(\lambda^\star,\nu^\star))$ denote an optimal primal-dual solution pair for $\cP^m$ and $\cD^m$, where $\nu^\star = [0_{n^0} \, (\nu^{\star,1})\T \, \ldots \, (\nu^{\star,m})\T]\T$. Note that such a pair exists almost surely due to the feasibility part of Assumption \ref{ass:feas}. 
Given that $\cP^m$ and $\cD^m$ are linear, strong duality holds and we have the following complementary slackness conditions, \cite{Eisemann_1964}, that are necessarily satisfied by $(x^\star,(\lambda^\star,\nu^\star))$:
\begin{align}
	[x^\star - d]_j \nu_j^\star &= 0, \quad j=1,\ldots,n, \label{eq:slackP} \\
	[-c\T - (\lambda^\star)\T A - (\nu^\star)\T]_j x_j^\star &= 0, \quad j=1,\ldots,n, \label{eq:slackD}
\end{align}
where we recall that $[\,\cdot\,]_j$ denotes the $j$-th element of its argument.

Note that for $j=1,\ldots,n^0$, \eqref{eq:slackP} is valid as long as the convention $\infty \cdot 0 = 0$ is adopted (recall that $d^0$ is an extended vector with elements all equal to $+\infty$, while $\nu^{\star,0} = 0$ by definition). All the other conditions are instead the standard complementary slackness conditions for $\cP^m$ and $\cD^m$. In \eqref{eq:slackD}, the role of dual vector is played by $x^\star$; 
this is so because the dual of $\cD^m$ is the primal $\cP^m$ itself thanks to linearity and decision variables being continuous.

Let $B$, $\underline{N}$, $\overline{N}$ be the partitioning associated to the decomposition of $x^\star$ into basic and non-basic variables $x^\star_B$, $x^\star_{\underline{N}}$, and $x^\star_{\overline{N}}$, which is unique under the uniqueness part of Assumption \ref{ass:feas} and the non-degeneracy condition of Assumption \ref{ass:nondeg} (part 2) -- see Proposition \ref{prop:ext_basic}. With the same subscripts we denote the decomposition according to $B$, $\underline{N}$, $\overline{N}$ of other vectors/matrices like $A$, $c$ and the optimal dual variables $\nu^\star$. We then have the following proposition.
\begin{proposition} \label{prop:unique_dual}
	Consider Assumptions \ref{ass:feas} and \ref{ass:nondeg} (parts 1 and 2). 
	Then, almost surely with respect to $\mathbb{P}^m$, $\lambda^\star$ is uniquely determined by
	\begin{align}
		\lambda^\star = -(c_B\T A_B^{-1})\T. \label{eq:opt_dual}
	\end{align}
	
	\begin{proof}
		Under Assumption \ref{ass:feas}, $x^\star$ as well as its decomposition into baisc and non-basic variables are well-defined and unique almost surely with respect to $\mathbb{P}^m$. Thus all subsequent developments will hold $\mathbb{P}^m$-almost surely as well. Recall that due to Lemma \ref{rem:nonzero_basic}, $x^\star_j \neq 0$ and $x^\star_j \neq d_j$ for all $j \in B$. As a result, we have that
		\begin{align}
		\left\{ \begin{array}{ll}
			0 < x^\star_j  < d_j, & \mbox{for all $j \in B$};\\
			x^\star_j  = 0, & \mbox{for all $j \in \underline{N}$};\\
			x^\star_j  = d_j, & \mbox{for all $j \in \overline{N}$}.\end{array} \right. \label{eq:decomp_xstar}
		\end{align}
		By the complementary slackness conditions \eqref{eq:slackP} and \eqref{eq:slackD} and the first sub-case in \eqref{eq:decomp_xstar}, it follows that $\nu^\star_B = 0$. Moreover, since $x^\star_B \neq 0$ and $\nu^\star_B = 0$, it follows from \eqref{eq:slackD} that 
		\begin{align}
			-c_B\T -(\lambda^\star)\T A_B = 0. \label{eq:opt_dual_proof}
		\end{align}
		Since, $x^\star$ is a vertex, and hence an extended feasible solution of $\cP^m$ in \eqref{eq:opt_program},  the columns of $A_B$ are linearly independent -- see Proposition \ref{prop:ext_basic} -- and $A_B$ is invertible. Therefore, $\lambda^\star$ is uniquely determined by \eqref{eq:opt_dual_proof} resulting in \eqref{eq:opt_dual}. This concludes the proof.
	\end{proof}
\end{proposition}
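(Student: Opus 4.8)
The plan is to obtain \eqref{eq:opt_dual} by feeding the geometric structure of the optimal basic feasible solution into the complementary slackness conditions \eqref{eq:slackP}--\eqref{eq:slackD}. As a preliminary step I would note that under Assumptions \ref{ass:feas} and \ref{ass:nondeg} (parts 1 and 2) the optimal primal solution $x^\star$ is almost surely unique and, by Proposition \ref{prop:ext_basic} together with Lemmas \ref{lem:A_full_row_rank} and \ref{rem:nonzero_basic}, admits an almost surely well-defined decomposition into basic and non-basic variables indexed by $B$, $\underline{N}$, $\overline{N}$ with $|B| = p$ and $A_B$ square and invertible; hence every manipulation below is valid $\mathbb{P}^m$-almost surely.

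The crucial input is the strict interiority of the basic variables: Lemma \ref{rem:nonzero_basic} guarantees $0 < x^\star_j < d_j$ for every $j \in B$, while $x^\star_j = 0$ on $\underline{N}$ and $x^\star_j = d_j$ on $\overline{N}$. I would then exploit this in the two complementary slackness relations. From \eqref{eq:slackP}, since $[x^\star - d]_j \neq 0$ for $j \in B$, the corresponding dual multipliers must vanish, i.e.\ $\nu^\star_B = 0$. From \eqref{eq:slackD}, since $x^\star_j \neq 0$ for $j \in B$, the bracketed reduced-cost term must be zero on the indices in $B$; substituting $\nu^\star_B = 0$ leaves the linear system
\begin{align}
	-c_B\T - (\lambda^\star)\T A_B = 0. \nonumber
\end{align}
It then remains only to solve for $\lambda^\star$: because the columns of $A_B$ are linearly independent and $|B| = p$, the matrix $A_B$ is invertible, so the system has the unique solution $(\lambda^\star)\T = -c_B\T A_B^{-1}$, which is exactly \eqref{eq:opt_dual}; since the right-hand side depends only on the (unique) basis $B$ and the problem data, this simultaneously establishes uniqueness of $\lambda^\star$.

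I expect the only genuine subtlety to be ensuring that basic variables are \emph{strictly} interior to the box $[0,d]$, which is precisely what allows both complementary slackness conditions to be used in tandem — first to pin down $\nu^\star_B$, then $\lambda^\star$. This is exactly what the non-degeneracy Assumption \ref{ass:nondeg} (part 2) delivers through Lemma \ref{rem:nonzero_basic}: without it a basic coordinate could sit at a bound, introducing an extra active constraint, and the two relations would no longer force $\nu^\star_B = 0$. A minor bookkeeping point is the extended-real convention $\infty \cdot 0 = 0$ in \eqref{eq:slackP} for the slack coordinates of $x^0$ (where $d_j = +\infty$), but such indices always belong to $\underline{N}$, never to $B$, so they play no role in the system that determines $\lambda^\star$.
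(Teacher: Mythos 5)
Your proposal is correct and follows essentially the same route as the paper's own proof: strict interiority of the basic variables via Lemma \ref{rem:nonzero_basic}, then \eqref{eq:slackP} to force $\nu^\star_B = 0$, then \eqref{eq:slackD} to obtain $-c_B\T - (\lambda^\star)\T A_B = 0$, and invertibility of $A_B$ (Proposition \ref{prop:ext_basic}, Lemma \ref{lem:A_full_row_rank}) to conclude. Your closing remarks on the role of non-degeneracy and on the $\infty \cdot 0 = 0$ convention for the slack coordinates are accurate and consistent with the paper's treatment.
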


Under Assumption \ref{ass:feas} and the non-degeneracy conditions of Assumption \ref{ass:nondeg} (parts 1 and 2), the converse of the complementary slackness conditions \eqref{eq:slackP} and \eqref{eq:slackD} are also valid. This is summarized in the following lemma.

\begin{lemma} \label{rem:slack_nondeg}
	Consider Assumptions \ref{ass:feas} and \ref{ass:nondeg} (parts 1 and 2), and let $x^\star$ and $\lambda^\star,\nu^\star$ be the unique primal-dual solution pair associated with $\cP^m$ in \eqref{eq:opt_program} and $\cD^m$ in \eqref{eq:dual1}. For any $j =1,\ldots,\ell$, the following equivalencies hold:
	\begin{enumerate}
		\item  $x^\star_j \in (0,d_j) \iff  [-c\T - (\lambda^\star)\T A]_j = 0$;
		\item $x^\star_j = d_j \iff \nu^\star_j > 0$.
	\end{enumerate}
	
	\begin{proof}
		Part 1:
		The fact that $x^\star_j \in (0,d_j)$ implies $[-c\T - (\lambda^\star)\T A]_j = 0$ follows from the derivation of \eqref{eq:opt_dual_proof}.
		To show the converse, since $x^\star_j \in (0,d_j)$ is equivalent to $j \in B$ where $B$, $\underline{N}$, $\overline{N}$ is the indices partitioning associated to $x^\star$, we will consider for the sake of contradiction that there exists $\tilde{j} \in \underline{N}$ and $[-c\T - (\lambda^\star)\T A]_{\tilde{j}} = 0$. The case where $\tilde{j} \in \overline{N}$ also leads to a contradiction using symmetric arguments. The fact that $\tilde{j} \in \underline{N}$ allows us to consider the vector $\tilde{x}$ constructed in the proof of Proposition \ref{prop:opt_cond}: given an extended basic feasible solution $\hat{x}$, $\tilde{x}$ constitutes a replica of $\hat{x}$ with the $\tilde{j}$-th element perturbed by $\mu \in (0,d_{\tilde{j}})$. Recall that for $\mu$ small enough, $\tilde{x}$ is feasible for $\cP^m$ in \eqref{eq:opt_program}, as shown in Proposition \ref{prop:opt_cond} (where parts 1 and 2 of Assumption \ref{ass:nondeg} are used). Take now $\hat{x} = x^\star$, and consider the cost increment $c\T \hat{x} - c\T \tilde{x}$ as we move from $x^\star$ to $\tilde{x}$. Since by construction $x^\star$ and $\tilde{x}$ differ only in the $\tilde{j}$-th element, we obtain that
		\begin{align}
			c\T (\tilde{x} - x^\star) = [-c\T - (\lambda^\star)\T A]_{\tilde{j}} \mu = 0,
		\end{align}
		where the last equality follows since we assumed $[-c\T - (\lambda^\star)\T A]_{\tilde{j}} = 0$. The last statement implies that $\tilde{x}$ is an optimal solution for $\cP^m$, but, since $\tilde{x} \neq x^\star$, this contradicts the uniqueness of the optimal solution to $\cP^m$ (Assumption \ref{ass:feas}).
		
		Part 2: If $\nu^\star_j > 0$, then the complementary slackness condition in \eqref{eq:slackP} implies that $x^\star_j = d_j$. Conversely, if $x^\star_j =d_j$ assume for the sake of contradiction that $\nu^\star_j = 0$. By \eqref{eq:slackD} we would then have that $[-c\T - (\lambda^\star)\T A]_j = 0$, which by point \emph{(1)} in the present lemma is equivalent to $x^\star_j \in (0,d_j)$. However, this establishes a contradiction with the fact that $x^\star_j = d_j$, thus showing that $\nu^\star_j > 0$.
	\end{proof}
\end{lemma}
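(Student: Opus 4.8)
The plan is to split the statement into its two equivalences and, within each, to dispatch the \emph{easy} implication directly from the complementary slackness conditions \eqref{eq:slackP}--\eqref{eq:slackD} and the analysis already carried out for Proposition \ref{prop:unique_dual}, leaving the two converses as the only substantive work. Concretely, for Part~1 the implication $x^\star_j \in (0,d_j) \Rightarrow [-c\T - (\ls)\T A]_j = 0$ is immediate: $x^\star_j \in (0,d_j)$ means $j \in B$, whence $\nu^\star_j = 0$ (as in the derivation of \eqref{eq:opt_dual_proof}), and then \eqref{eq:slackD} together with $x^\star_j \neq 0$ forces $[-c\T - (\ls)\T A]_j = 0$. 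For Part~2 the implication $\nu^\star_j > 0 \Rightarrow x^\star_j = d_j$ is a one-line consequence of \eqref{eq:slackP}.

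The main obstacle is the converse of Part~1, $[-c\T - (\ls)\T A]_j = 0 \Rightarrow x^\star_j \in (0,d_j)$, which I would prove by contradiction by reusing the perturbation construction from the proof of Proposition \ref{prop:opt_cond}. Suppose $[-c\T - (\ls)\T A]_j = 0$ but $x^\star_j \notin (0,d_j)$; then, by the three-way split \eqref{eq:decomp_xstar}, $j$ is a non-basic index, either $j \in \underline N$ (so $x^\star_j = 0$) or $j \in \overline N$ (so $x^\star_j = d_j$). In the first case I would take $\hat x = x^\star$ and build the perturbed feasible vector $\tilde x$ exactly as there: keep all non-basic coordinates fixed except the $j$-th, which is moved to some $\mu \in (0,d_j)$, and set $\tilde x_B$ by \eqref{eq:xo_B}; for $\mu > 0$ small enough $\tilde x$ is feasible for $\cP^m$, as shown in that proof (this is precisely where parts~1 and~2 of Assumption \ref{ass:nondeg} are used). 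Substituting $\ls = -(c_B\T A_B^{-1})\T$ from Proposition \ref{prop:unique_dual} into \eqref{eq:cost_incr}, the relevant reduced-cost coefficient coincides, up to sign, with $[-c\T - (\ls)\T A]_j$, so the cost increment is $c\T(\tilde x - x^\star) = [-c\T - (\ls)\T A]_j\,\mu = 0$. Hence $\tilde x$ is optimal for $\cP^m$ yet $\tilde x \neq x^\star$, contradicting the uniqueness part of Assumption \ref{ass:feas}. The case $j \in \overline N$ is symmetric, perturbing $x^\star_j = d_j$ downward to $d_j - \mu$.

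Finally, the converse of Part~2, $x^\star_j = d_j \Rightarrow \nu^\star_j > 0$, I would also obtain by contradiction. Assume $x^\star_j = d_j$ but $\nu^\star_j = 0$; note that $d_j > 0$ by Assumption \ref{ass:nondeg}(1) (indeed necessarily $j > n^0$, since a slack coordinate has $d_j = +\infty$, which a finite $x^\star$ cannot attain), so $x^\star_j = d_j > 0$. Then \eqref{eq:slackD} with $\nu^\star_j = 0$ gives $[-c\T - (\ls)\T A]_j = 0$, and the part of the lemma just established yields $x^\star_j \in (0,d_j)$, contradicting $x^\star_j = d_j$. Since $\nu^\star \geq 0$ always, $\nu^\star_j \neq 0$ forces $\nu^\star_j > 0$, which completes the argument. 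Apart from the reuse of the Proposition \ref{prop:opt_cond} construction and the explicit expression for $\ls$, every step is routine bookkeeping with complementary slackness.
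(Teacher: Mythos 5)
Your proposal is correct and follows essentially the same route as the paper: the two easy implications from complementary slackness, the converse of Part~1 by the perturbation construction of Proposition~\ref{prop:opt_cond} combined with uniqueness of the minimizer, and the converse of Part~2 by reduction to Part~1 via \eqref{eq:slackD}. The only (welcome) addition is your explicit observation that $x^\star_j = d_j$ forces $j > n^0$ and hence $x^\star_j > 0$, a detail the paper leaves implicit when invoking \eqref{eq:slackD}.
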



\section{Proof of Theorem \ref{thm:prob_opt} \label{sec:proof} \label{sec:max-cap}}

We are now in a position to prove Theorem \ref{thm:prob_opt}. To this end, first notice the following equivalences:
\begin{align}
	\{&i \in  \{1,\ldots,m\} :~ \exists j \in \cJ^i \text{ such that } x^\star_j \neq 0  \} \nonumber \\
		& \stackrel{\text{(i)}}{=}  \{i \in {1,\ldots,m} :~ \exists j \in \cJ^i \text{ such that } x^\star_j = d_j \} \nonumber \\
		& ~~ \cup  \{i \in {1,\ldots,m}:~ \exists j \in \cJ^i \text{ such that } x^\star_j \in (0,d_j) \} \nonumber \\
		& \stackrel{\text{(ii)}}{=}  \{i \in {1,\ldots,m}:~ \exists j\in \cJ^i \text{ such that } \nu^\star_j > 0 \} \nonumber \\
		&~~ \cup \{i \in {1,\ldots,m}:~ \exists j\in \cJ^i \nonumber \\
			&~~~~~~~~~~~~~~~~~~~~~~~\text{ such that } [-(c^i)\T - (\lambda^\star)\T A^i]_j = 0 \} \nonumber \\
		& \stackrel{\text{(iii)}}{=}  \{i \in {1,\ldots,m}:~ h^{i,\star} > 0 \} \nonumber \\
		&~~ \cup \{i \in {1,\ldots,m}:~ \lambda^\star \text{ lies on the boundary of } R^i \}, \label{eq:compression_equiv}
\end{align}
where $R^i$ is the polytopic constraint set defined as $R^i = \{ \lambda: \; \max\{0,-(c^i)\T-\lambda\T A^i \} d^i \leq 0 \}$.
The equality in (i) is trivial, while (ii) follows from Lemma \ref{rem:slack_nondeg}. To show (iii) notice first that the first sets of indices in (ii) and (iii) coincide, since $h^{i,\star} = (\nu^{i,\star})\T d^i$, for all $i=1,\ldots,m$, due to Lemma \ref{lemma:dual_equiv} and $d^i >0$ by Assumption \ref{ass:nondeg}. Excluding the $i$'s for which $h^{i,\star} > 0$, which have already been accounted for, the remaining  $i$'s are such that $\max\{0,-(c^i)\T-\lambda\T A^i \} d^i \leq 0$ (i.e., $\lambda^\star \in R^i$); see $\widetilde{\cD}^m$ in \eqref{eq:dual2} and recall that the constraints in $\widetilde{\cD}^m$ and those in \eqref{eq:dual2_con} are equivalent. If $[-(c^i)\T - (\lambda^\star)\T A^i]_j = 0$ for some $j \in\cJ^i$ as in the second set of indices in (ii), then $\lambda^\star$ belongs at least to one edge of $R^i$, i.e., it lies on the boundary. Notice that while the set of indices in the union in (i) and (ii) may overlap, this is not the case for (iii), where the two sets are disjoint.

Recall that $s^\star$, as defined in \eqref{eq:compression}, denotes the number of agents whose optimal decision vector as returned by $\cP^m$ in \eqref{eq:opt_program} has at least one non-zero element. By \eqref{eq:compression_equiv}, we have that $s^\star$ can also be alternatively defined as
\begin{align}
	s^\star = &\: \Big|\{i \in {1,\ldots,m}:~ h^{i,\star} > 0 \} \Big| \label{eq:compression_dual} \\
		& + \Big| \{i \in {1,\ldots,m}:~ \lambda^\star \text{ lies on the boundary of } R^i \} \Big|. \nonumber
\end{align}

The dual $\widetilde{\cD}^m$ in \eqref{eq:dual2} admits an additional interpretation. Elements $h^i$, $i=1,\ldots,m$, could be thought of as constraint relaxation variables for the constraints $\max \{0, - (c^i)\T -\lambda\T A^i \} d^i \leq 0$ (recall again that the constraints of $\widetilde{\cD}^m$ and those in \eqref{eq:dual2_con} are equivalent). These relaxation variables are  penalized in the objective function of $\widetilde{\cD}^m$.
It follows from \eqref{eq:compression_dual} that agents that have at least one non-zero element in their decision vector are those for which the corresponding constraint $\max \{0, - (c^i)\T -\lambda\T A^i \} d^i \leq 0$ is either violated by $\lambda^\star$ -- i.e., $h^{i,\star} >0$ -- or is such that $\lambda^\star$ lies on its boundary.

Scenario optimization problems with constraint relaxation, a class of programs within which $\widetilde{D}^m$ fits, have been studied in \cite[Section 5.2]{Garatti_Campi_2019}, where bounds on the probability that the resulting optimal solution violates a newly extracted constraint are provided. 
Specifically, adapting \cite[Theorem 4 \& Footnote 4]{Garatti_Campi_2019} to the notation of $\widetilde{\cD}^m$ in \eqref{eq:dual2}, we have the following result. 
Fix $\beta \in (0,1)$, and consider $\underline{\epsilon}(\cdot)$ and $\overline{\epsilon}(\cdot)$ as defined in \eqref{eq:eps_min} and \eqref{eq:eps_max}, respectively. Let $s^\star$ be as in \eqref{eq:compression_dual}.
Under Assumptions \ref{ass:feas} and \ref{ass:nondeg} (note that part 3 of Assumption \ref{ass:nondeg} is required for this result), we have that
\begin{align}
	\mathbb{P}^m & \Big\{ \{\delta^i\}_{i=1}^m \in \Delta^m :~ \mathbb{P} \big\{ \bar{\delta} = (\bar{n},\bar{c},\bar{d},\bar{A}) \in \Delta:~ \nonumber \\
		& \max \{0, - \bar{c}\T -(\lambda^\star)\T \bar{A} \} \bar{d} >0 \big\} \in [\underline{\epsilon}(s^\star), \overline{\epsilon}(s^\star)] \Big\} \nonumber \\ 
		&\geq 1-\beta, \label{eq:prob_bounds_SG}
\end{align}
i.e., with confidence at least $1-\beta$, the probability that $\lambda^\star$ (the optimal dual solution for the $\lambda$-variables of $\widetilde{\cD}^m$, which depends on $\{\delta^i\}_{i=1}^m$) violates the constraint $\max \{0, - \bar{c}\T -\lambda\T \bar{A} \} \bar{d} >0$ when it comes to a new realization $\bar{\delta} = (\bar{n},\bar{c},\bar{d},\bar{A})$, lies within $[\underline{\epsilon}(s^\star), \overline{\epsilon}(s^\star)]$.

Fix now any $\{\delta^i\}_{i=1}^m$ and consider $\cP^m_+$ in \eqref{eq:opt_program_delta}, which has an additional agent parameterized by $\bar{\delta} = (\bar{n},\bar{c},\bar{d},\bar{A})$. Take $(x^\star,0)$, which is clearly feasible for $\cP^m_+$ and notice that this is a basic feasible solution for $\cP^m_+$, since it is a vertex of the polytopic feasibility domain of $\cP^m_+$. Since variables in $(x^\star,0)$ corresponding to the new agent are zero, the new agent will not contribute to the basic components of  $(x^\star,0)$, and clearly not to the ones that are active at the upper-limit constraints. Therefore, 
the decomposition of $[c\T \; \bar c\T]\T$ and $[A \; \bar A]$ corresponding to the basic and non-basic variables of $(x^\star,0)$ will be 
\begin{align} \label{eq:xstar-0-decomposition}
	[c\T \; \bar c\T]_B \T = c_B, & \quad [A \; \bar A]_B = A_B, \nonumber \\
	[c\T \; \bar c\T]_{\overline{N}} \T = c_{\overline{N}}, & \quad [A \; \bar A]_{\overline{N}} = A_{\overline{N}}, \nonumber \\
	[c\T \; \bar c\T]_{\underline{N}} \T = [c_{\underline{N}} \T \; \bar c\T] \T, & \quad [A \; \bar A]_{\underline{N}} = [A_{\underline{N}} \; \bar A], 
\end{align}
where $A_B, A_{\underline{N}}, A_{\overline{N}}$ and $c_B, c_{\underline{N}}, c_{\overline{N}}$ constitute the partition of $A$ and $c$ corresponding to basic and non-basic variables of $x^\star$, the optimal solution to $\cP^m$ in \eqref{eq:opt_program}.

We have the following equivalences that hold almost surely.
\begin{align}
	\mathbb{P} \{ \bar{\delta} \in &\Delta:~ \max \{0, - \bar{c}\T -(\lambda^\star)\T \bar{A} \} \bar{d} \leq 0\} \nonumber \\
		& ~~ = ~~~~ \mathbb{P} \{\bar{\delta}  \in \Delta:~ -\bar{c}\T - (\lambda^\star)\T \bar{A} \leq 0\} \nonumber \\
		& \stackrel{\text{Prop. \ref{prop:unique_dual}}}{=} ~~\mathbb{P} \{ \bar{\delta}  \in \Delta:~ \bar{c}\T - c_B\T A_B^{-1} \bar{A} \geq 0 \} \nonumber \\
		& \stackrel{\text{Prop. \ref{prop:opt_cond}}}{=} ~~\mathbb{P} \{ \bar{\delta}  \in \Delta:~ x_{+}^\diamond = (x^\star,0)\}. \label{eq:prob_bounds_proof1}
\end{align}
The first equality applies because $\max \{0, - \bar{c}\T -(\lambda^\star)\T \bar{A} \} \geq 0$ while $\bar{d} > 0$. The second equality follows by direct substitution of the (almost surely unique) expression for $\lambda^\star$ in \eqref{eq:opt_dual}, while the last one derives from Proposition \ref{prop:opt_cond} applied to $(x^\star,0)$, which says that $(x^\star,0)$ is  optimal if and only if $[c\T \; \bar c\T]_{\underline{N}} - [c\T \; \bar c\T]_B  [A \; \bar A]_B^{-1} [A \; \bar A]_{\underline{N}} \geq 0$ and $[c\T \; \bar c\T]_{\overline{N}} - [c\T \; \bar c\T]_B  [A \; \bar A]_B^{-1} [A \; \bar A]_{\overline{N}} \leq 0$. Given the expressions in \eqref{eq:xstar-0-decomposition} and since $A_B, A_{\underline{N}}, A_{\overline{N}}$ and $c_B, c_{\underline{N}}, c_{\overline{N}}$ satisfy \eqref{eq:opt_cond1} and \eqref{eq:opt_cond2} being the partitioning associated to the optimal solution $x^\star$ to $\cP^m$, the conditions for the optimality of $(x^\star,0)$ reduce to $\bar{c}\T - c_B\T A_B^{-1} \bar{A} \geq 0$ ($(x^\star,0)$ implies that the new agent only contributes to the non-basic variables that are active at the non-negativity constraints).

By \eqref{eq:prob_bounds_proof1}, we then have almost surely that
\begin{align}
	\mathbb{P} \{ \bar{\delta}  & \in \Delta:~ x_{+}^\diamond \neq (x^\star,0)\} \nonumber \\
		&= \mathbb{P} \{ \bar{\delta}  \in \Delta:~ \max \{0, - \bar{c}\T -(\lambda^\star)\T \bar{A} \} \bar{d} > 0\},\label{eq:prob_bounds_proof2}
\end{align}
i.e., the probability that the optimal solution to $\cD^m$ in \eqref{eq:dual1} violates a new constraint associated to $\bar{\delta}$ is almost surely equal to the probability that the arrival of the new agent $\bar{\delta}$ alters the optimal solution with the initial $m$ agents only. Using \eqref{eq:prob_bounds_proof2} in \eqref{eq:prob_bounds_SG}, the inequality \eqref{eq:prob_bounds} of Theorem \ref{thm:prob_opt} follows. This concludes the proof. \qqed

\begin{remark} \label{rmk:extended_setup}
	Theorem \ref{thm:prob_opt} can be extended so as to encompass situations where local upper-limit constraints for some variables and for some agents are not present. This case can be accounted for without altering the setup of this paper by letting $d^i$ be a vector of extended real variable and setting  to $+\infty$ the elements corresponding to variables for which there is no upper limit. However, since $x$ is anyway a vector of a standard Euclidean space, constraints of the type $x_j \leq +\infty$ must be interpreted as $x_j < +\infty$. Note also that any basic feasible solution $\hat{x}$ must take value in an Euclidean space too, so that if $d_j = +\infty$ for some $j$, then it can either be $j \in B$ or $j \in \underline{N}$, since $j \in \overline{N}$ would give $\hat{x}_j = d_j = +\infty$, which is not possible. In this extended setup, the statement of Theorem \ref{thm:prob_opt} remains unchanged and also the proof can be carried over without modifications provided that the convention $\infty \cdot 0 = 0$ is adopted. This way, whenever $d_j = +\infty$ for some $j$, $\nu^\star_j$ is forced to be $0$, i.e. $[-(c)\T-(\lambda^\star)\T A]_j \leq 0$. This is coherent with Lemma \ref{lemma:dual_equiv}, since  $\nu^\star_j > 0$ would imply $x^\star_j = d_j = +\infty$, which is not possible. \qedstar
\end{remark}
\begin{remark}	\label{rml:rapprochment}
	In \cite{Falsone_etal_CDC2017}, a version of problem \eqref{eq:opt_program} where no local upper-limit constraints are present was considered. This problem can be addressed by resorting to the extended setup explained in Remark \ref{rmk:extended_setup} above, that is, by letting all elements of $d$ be equal to $+\infty$. In this specific situation, it is possible to establish the \emph{a priori} bound $s^\star \leq p$ (recall that $p$ denotes the number of budget-type constraints) irrespective of the sample $\{\delta^i\}_{i=1}^m$. As a matter of fact, $d_j = +\infty$ for all $j$ implies that $\nu^\star_j = 0$ for all $j$ so that \eqref{eq:compression_equiv} yields $s^\star = | \{i \in {1,\ldots,m}:~ \exists j \in \cJ^i \text{ such that }  [- (c^i)\T -(\lambda^\star)\T A^i ]_j = 0 \} | = | \{i \in {1,\ldots,m}:~ \exists j \in \cJ^i \text{ such that } x^\star_j \in (0,d_j) \} |$ ($|\cdot|$ denotes cardinality). It follows then from \eqref{eq:decomp_xstar} and Proposition \ref{prop:ext_basic} that $s^\star \leq |B| = p$. The result of \cite[Theorem 1]{Falsone_etal_CDC2017} (see also \eqref{eq:prob_bounds_OLD}) can be then obtained by noticing that, under the condition $s^\star \leq p$, the characterization of $\mathbb{P} \big\{ \bar{\delta} = (\bar{n},\bar{c},\bar{d},\bar{A}) \in \Delta:~ \max \{0, - \bar{c}\T -(\lambda^\star)\T \bar{A} \} \bar{d} >0 \big\}$ provided in \cite[Theorem 2.4]{Campi_Garatti_2008} can be used in place of \eqref{eq:prob_bounds_SG}. \qedstar
\end{remark}


\section{Illustrative example: application to optimal cargo aircraft loading} \label{sec:simulations}

The main purpose of this example section is to illustrate the results of the paper; therefore, we opted for a simple, yet not simplistic, problem with an application appeal that favors interpretability as much as possible.

We consider a cargo aircraft loading problem inspired by \cite{HuangLu_15}, where  a company wants to load a cargo airplane as much as possible so as to obtain the maximum profit from carrying goods among a batch of $m$ requests. The decision variables $x^i$ for this problem, which are all scalars, are the quantities in kg of various items to be carried. To each $x^i$ there is associated a coefficient $p^i$ that specifies how much the freight company is paid for carrying a unitary quantity of the specified ware. Typically, more urgent shipments may be paid more in order to arrive on time. Each $x^i$ has a lower bound set to $0$ ($x_i = 0$ means that item $i$ is not shipped) and an upper bound $d^i$ set by the estimated demand (by the customers of the transportation company) in order to avoid shipping excessive quantities of a merch that would remain unsold. Finally, the employed cargo aircraft has maximum weight and volume capacities, say $W$ and $V$, which set limits on the amounts and types of goods that can be shipped. Altogether, this leads to the following linear problem:

\begin{align} \label{Flight1}
	\max_{ \{x^i \in \mathbb{R} \}_{i=1}^m } \, & \quad \sum_{i=1}^{m} p^i x^i \\
		\textrm{subject to:}  & \quad \sum_{i=1}^{m} x^i \leq W,  \nonumber \\
			& \quad \sum_{i=1}^{m} \frac{1}{\rho^i} x^i \leq V, \nonumber \\
			& \quad 0 \leq  x^i \leq d^i, \nonumber 
\end{align}
where $\rho^i$ is the density of the $i$-th good and $p^i,\rho^i,d^i$ are assumed to be independently observed from a probability distribution that represents the entire variety of goods that can be shipped. Problem \eqref{Flight1} can be indeed rewritten as $\cP^m$ in \eqref{eq:opt_program} by introducing the additional slack variable $x^0 \in \mathbb{R}^2$ and by setting $A^0 = I$, $A^i = [1 \; \frac{1}{\rho^i}]^\top$, $i=1,\ldots,m$, $c^i = - p^i$, $i=1,\ldots,m$, and $b=[W \; V]^\top$. 

After an air freight company has received an initial batch of requests from customers and has planned the optimal arrangement of these initial items on an aircraft, it may be that the obtained solution is not completely satisfactory. The company may want to decide then whether it is convenient to wait for some late items from other customers and to re-plan the aircraft loading, by discarding parts of the current goods, and e.g. shipping them on another plane departing later. Waiting for the new items to arrive and reloading the aircraft takes additional time that can likely cause a delay and requires extra work that may result in additional cost, but at the same time it may be worth waiting for late items that are more profitable than the existing ones (e.g., more urgent goods may arrive, leading to higher profit). The theory developed in this paper allows one to evaluate the probability of improving the solution with the arrival of a new item and therefore it provides a tool to support the company's decision whether to open for new requests or stay with the original arrangement. In particular, if this probability is assessed to be high, the company will be eager to wait for new items. In the opposite case, the company will opt instead for not waiting for further requests.

The arrival of a new item corresponds to solving
\begin{align} \label{Flight2}
	\max_{ \{x^i \in \mathbb{R} \}_{i=1}^m, \bar{x} \in \mathbb{R} } & \quad \sum_{i=1}^{m} p^i x^i + \bar{p} \bar{x} \\
		\textrm{subject to:} \;\;\, & \quad \sum_{i=1}^{m} x^i + \bar{x}  \leq W,  \nonumber \\
			& \quad \sum_{i=1}^{m} \frac{1}{\rho^i} x^i + \frac{1}{\bar{\rho}} \bar{x} \leq V, \nonumber \\
			& \quad 0 \leq  x^i \leq d^i, \quad 0 \leq \bar{x} \leq \bar{d}, \nonumber 
\end{align}
and Theorem \ref{thm:prob_opt} in the present context implies that $[\underline{\epsilon}(s^\star), \overline{\epsilon}(s^\star)]$, where $\underline{\epsilon}$ and $\overline{\epsilon}$ are computed as in \eqref{eq:eps_min}-\eqref{eq:eps_max} and $s^\star$ is the number of non zero components in the optimal solution to \eqref{Flight1}, is a valid assessment of the probability that \eqref{Flight2} improves over \eqref{Flight1} with confidence $1-\beta$.

To test numerically the validity of Theorem \ref{thm:prob_opt}, problem 
\eqref{Flight1} was repeatedly solved $100$ times with different batches of $m$ items, and each time the optimal solution $x^\star_{(t)}$, $t=1,\ldots,100$, and $s^\star_{(t)}$, $t=1,\ldots,100$, were computed. For each $x^\star_{(t)}$, $M = 50 \cdot m$ new items $\bar{p},\bar{\rho},\bar{d}$ were then considered and problem \eqref{Flight2} was solved $M$ times so as to empirically compute the probability that the solution $x^\diamond_{+,(t)}$ to \eqref{Flight2} improves over $x^\star_{(t)}$. That is, 
$$
\hat{\mathbb{P}} \{ x^\diamond_{+,(t)} \neq (x^\star_{(t)},0) \} = \frac{\text{no. of cases s.t. } x^\diamond_{+,(t)} \neq (x^\star_{(t)},0)}{M}.
$$
The pairs $(s^\star_{(t)},\hat{\mathbb{P}} \{ x^\diamond_{+,(t)} \neq (x^\star_{(t)},0) \})$ were then plotted in a bi-dimensional graph along with the curves $\underline{\epsilon}(k)$ and $\overline{\epsilon}(k)$ so as to allow for a visual inspection that $\hat{\mathbb{P}} \{ x^\diamond_{+,(t)} \neq (x^\star_{(t)},0) \}$ is indeed within $[\underline{\epsilon}(s^\star_{(t)}), \overline{\epsilon}(s^\star_{(t)})]$ as predicted by Theorem~\ref{thm:prob_opt}.

The simulations were carried out by setting the problem parameters as follows:
\begin{itemize}
	\item $p^i$ and $\bar{p}$ were independently extracted from a uniform distribution over $[p_{\min},p_{\max}]$, where $p_{\min} = 20$ and $p_{\max} = 60$;
	\item $\rho^i$ and $\bar{\rho}$ were independently extracted from a uniform distribution over $[\rho_{\min},\rho_{\max}]$, where $\rho_{\min} = 900$ (approximately the density of polyurethane plastic) and $\rho_{\max} = 7000$ (close to that of iron);
	\item $d^i$ and $\bar{d}$ were independently extracted from a uniform distribution over $[d_{\min},d_{\max}]$. Various choices for $d_{\min}$ and $d_{\max}$ were considered as discussed in the sequel;
	\item $W$ and $V$ were set to the weight and volume capacity of a Boeing 737 MAX 8 aircraft;\footnote{ http://www.boeing.com/resources/boeingdotcom/commercial/airports/ acaps/737MAX\_RevA.pdf}
	\item the number $m$ of initial agents was set to $100$ in a first number of simulations and to $200$ in a second batch;
	\item $\beta$ was set to $10^{-7}$ so as to enforce a quite high confidence, which amounts to practical certainty.
\end{itemize}
Figure \ref{fig:m=100_uniform} depicts the results obtained for $m = 100$ and various values of $d_{\min}$ and $d_{\max}$ as reported in the figure legend.
\begin{figure*}[t]
	\centering
	\hfill
	\begin{subfigure}[t]{0.48\textwidth}
		\includegraphics[width=\columnwidth]{./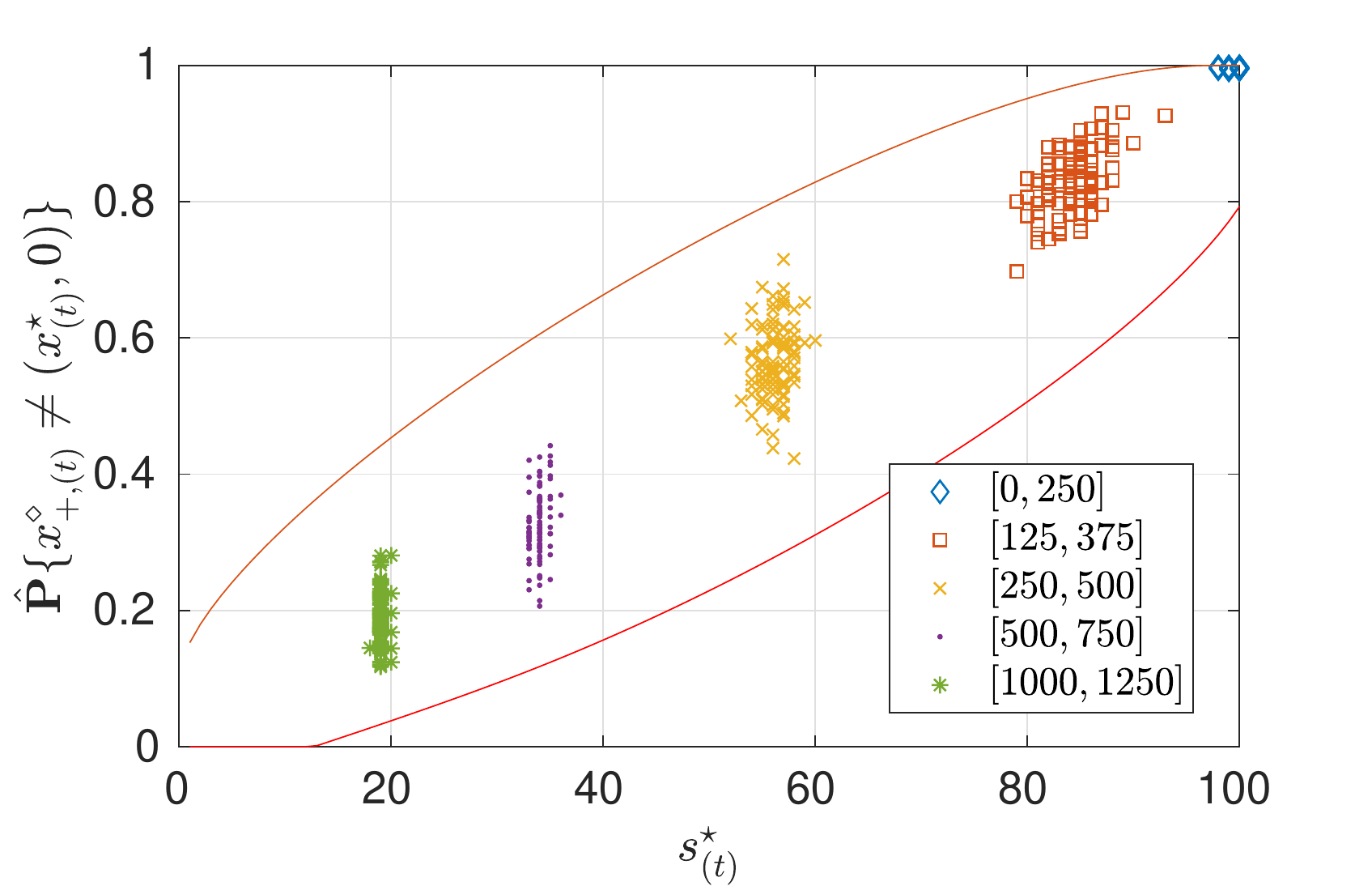}
		\caption{$m = 100$}
		\label{fig:m=100_uniform}
	\end{subfigure}
	\hfill
	\begin{subfigure}[t]{0.48\textwidth}
		\includegraphics[width=\columnwidth]{./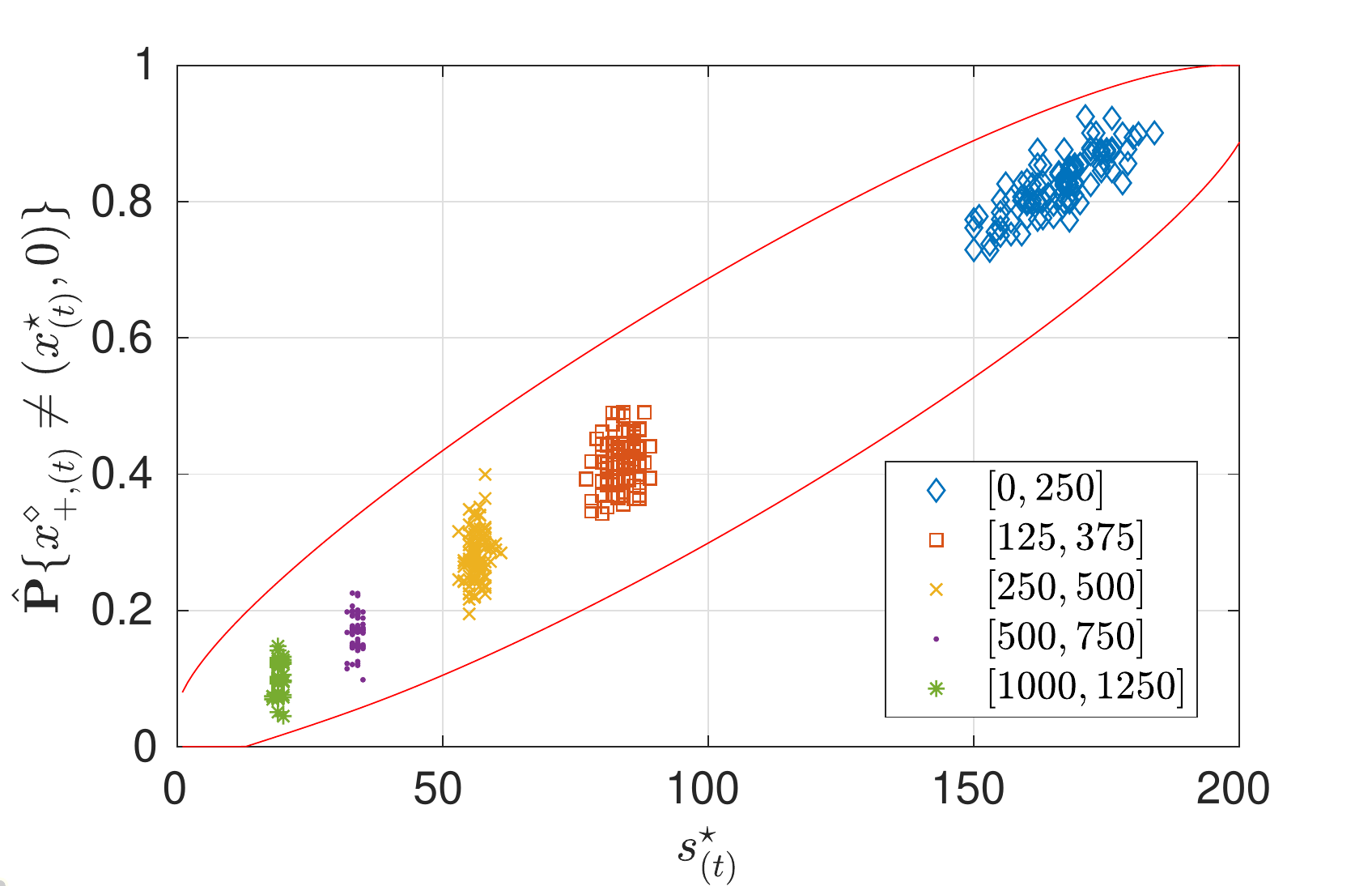}
		\caption{$m = 200$}
		\label{fig:m=200_uniform}
	\end{subfigure}
	\hfill\null
	\caption{Cases with $m \in \{100,200\}$ agents, uniform distribution over $[d_{\min},d_{\max}]$. Solid line shows the theoretical upper and lower bounds $\overline{\epsilon}(k)$,$\underline{\epsilon}(k)$ on the probability that the optimal solution changes upon the arrival of a new agent. Each cloud corresponds to a different choice $d_{\min}$ and $d_{\max}$ as indicated in the legend, and involves $m$ points. Each point within a cloud shows the empirical probability $\hat{\mathbb{P}} \{ x^\diamond_{+,(t)} \neq (x^\star_{(t)},0) \}$, for $t=1,\ldots,m$, corresponding to a different batch of $m$ items.}
\end{figure*}
As $d_{\min}$ and $d_{\max}$ change, different clouds of points are obtained corresponding to various goods distribution. Yet, as expected, in all cases $\hat{\mathbb{P}} \{ x^\diamond_{+,(t)} \neq (x^\star_{(t)},0) \}$ is in between $\underline{\epsilon}(s^\star)$ and $\overline{\epsilon}(s^\star)$ (given that $\beta = 10^{-7}$, $\hat{\mathbb{P}} \{ x^\diamond_{+,(t)} \neq (x^\star_{(t)},0) \} \notin [\underline{\epsilon}(s^\star_{(t)}), \overline{\epsilon}(s^\star_{(t)})]$ should happen on average once every $10$ billions cases). This confirms the validity on any decision taken by the air freight company based on $\underline{\epsilon}(s^\star)$ and $\overline{\epsilon}(s^\star)$ (for example, one sensible decision could be: wait for new requests if $\underline{\epsilon}(s^\star)$ is above $0.6$, do not wait if $\overline{\epsilon}(s^\star)$ is below $0.3$). As it appears, for high values of $d_{\min}$ and $d_{\max}$, indicatively represented by the mean $\frac{1}{2}(d_{\max}+d_{\min})$, $\hat{\mathbb{P}} \{ x^\diamond_{+,(t)} \neq (x^\star_{(t)},0) \}$, and correspondingly $s^\star_{(t)}$, concentrates around small values, while as $\frac{1}{2}(d_{\max}+d_{\min})$ is decreased, $\hat{\mathbb{P}} \{ x^\diamond_{+,(t)} \neq (x^\star_{(t)},0) \}$ and $s^\star_{(t)}$ tend to shift towards higher values. This behavior admits the following justification: large values of $\frac{1}{2}(d_{\max}+d_{\min})$ correspond to situations where it is likely that customers want to ship large quantities of their merchandise and the cargo company can fill the airplane with shipments from few customers best paying for the service resulting in a small $s^\star_{(t)}$; vice versa, when $\frac{1}{2}(d_{\max}+d_{\min})$ is low, the air cargo company has to rely on a broader variety of goods to exploit the full capacity of the aircraft, resulting in $s^\star_{(t)}$ close to $m$. In particular, for the lowest values of $d_{\min}$ and $d_{\max}$ in the simulation, it is likely that $m=100$ customers either do not or barely saturate the aircraft capacity, so that the probability to change the solution becomes either $1$ or extremely close to it. This corresponds to the cloud of points in Figure \ref{fig:m=100_uniform} that is concentrated towards the upper curve $\overline{\epsilon}(k)$.

Similar comments apply for the results depicted in Figure \ref{fig:m=200_uniform}, where $m=200$ and the same values for $d_{\min}$ and $d_{\max}$ as before were considered.
\begin{figure}[h!]
	\centering
	\includegraphics[width=0.98\columnwidth]{./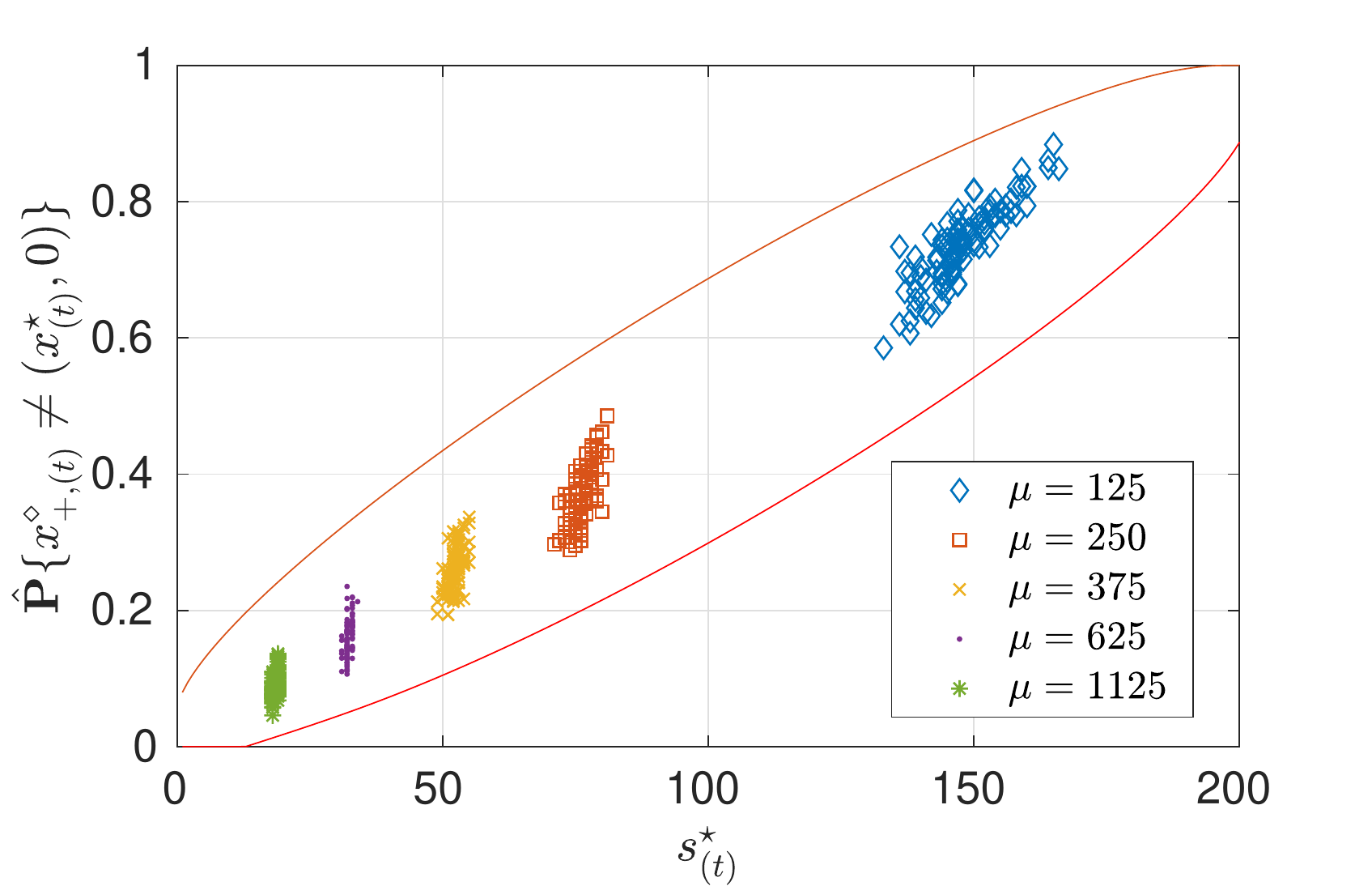}
	\caption{Case $m=200$, Gaussian distribution truncated over positive values with mean $\mu$ and variance $\sigma^2$. Solid line shows the theoretical upper and lower bounds $\overline{\epsilon}(k)$,$\underline{\epsilon}(k)$ on the probability that the optimal solution changes upon the arrival of a new agent. Each cloud corresponds to a different choice of $\mu$ as indicated in the legend, while $\sigma^2 = 3096$ in all cases, and involves $200$ points. Each point within a cloud shows the empirical probability $\hat{\mathbb{P}} \{ x^\diamond_{+,(t)} \neq (x^\star_{(t)},0) \}$, for $t=1,\ldots,200$, corresponding to a different batch of $m$ items.}
	\label{fig:m=200_gaussian}
\end{figure}
Increasing $m$ makes $\underline{\epsilon}(k)$ and $\overline{\epsilon}(k)$ getting closer each other, meaning that the assessment of $\hat{\mathbb{P}} \{ x^\diamond_{+,(t)} \neq (x^\star_{(t)},0) \}$ provided by Theorem \ref{thm:prob_opt} becomes tighter and tighter as the number of agents increases. Coherently, the clouds of points have smaller vertical dispersion in these simulations.

Figure \ref{fig:m=200_gaussian} depicts the simulation results for $m=200$, where, however, $d^i$ and $\bar{d}$ are now extracted from a Gaussian truncated over positive values, with mean $\mu$ taking various values corresponding to the centers of the intervals $[d_{\min},d_{\max}]$ considered in the previous two simulation experiments and variance $\sigma^2 = 3096$ (the variance has been chosen so that the $90\%$ of the probabilistic mass of the Gaussian is contained in the interval $[d_{\min},d_{\max}]$). Again, the assessment of $\hat{\mathbb{P}} \{ x^\diamond_{+,(t)} \neq (x^\star_{(t)},0) \}$ given by $[\underline{\epsilon}(s^\star_{(t)}), \overline{\epsilon}(s^\star_{(t)})]$ turns out to be valid in all the experiments, showing heuristically the distribution-free nature of the result. All comments provided for the previous figures apply in this case as well.


\section{Concluding remarks} \label{sec:conc}
In this paper we considered a class of multi-agent optimal resource sharing problem that can be encoded by linear programs. 
The amount of resource to be shared is fixed, while agents are subject to local constraints, with each of them contributing to the objective function and the budget-type shared resource constraint by a distinct (linear) term. All agents' contributions to cost and budget-type constraint, as well as agents' local constraints, depend on some random parameters, modeling heterogeneity among agents.

In this context, we studied the probability that the arrival of a new agent changes the optimal solution and, consequently, the share of resources for the original agents. This can be interpreted as a sensitivity index, which is of paramount importance for a correct management of the multi-agent system. Although the probability that the arrival of a new agent changes the solution cannot be directly computed, the main thrust of this paper was to provide a confidence interval and show that this probability can always be accurately estimated by counting the number of agents that are actually contributing to the solution of the original problem. This result was achieved by introducing certain dual formulations of the resource sharing linear program, which exhibit a scenario program structure.
Recent results from the theory of scenario optimization were then used to \emph{a posteriori} bound the probability of constraint violation for the dual optimal solution, which eventually was shown to be equivalent to the probability that the solution changes upon the arrival of a new agent. The efficacy of our results was demonstrated on a cargo aircraft loading problem.

Current work concentrates towards two directions: from a theoretical point of view, we aim at extending the class of resource sharing programs by allowing for more general constraints, while from an application point of view, we aim at employing our analysis to other applications that exhibit this structure, involving robotic surveying problems as well as economic dispatch problems (as e.g. in \cite{MXCGCTK2019}), including their demand side counterpart (e.g., see \cite{Margellos_Oren_2016}).


\bibliographystyle{abbrv}
\bibliography{RandomLPs}

\begin{IEEEbiography}[{\includegraphics[width=1in,height=1.25in,clip,keepaspectratio]{./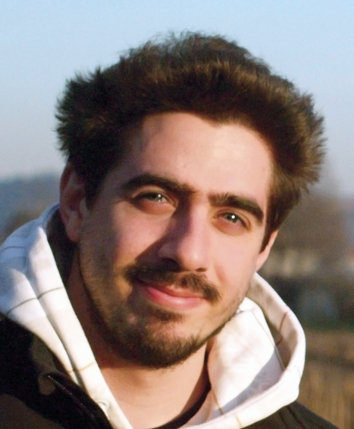}}]{Alessandro~Falsone}
	received the Bachelor degree in 2011 and the Master degree cum laude in 2013, both in Automation and Control Engineering from Politecnico di Milano. In 2018 he obtained the PhD degree in Information Engineering, System and Control division from Politecnico di Milano. During his PhD studies he also spent three months in the University of Oxford as a visiting researcher. Since 2018 he is a junior assistant professor at the Dipartimento di Elettronica, Informazione e Bioingegneria at Politecnico di Milano. His current research interests include distributed optimization and control, optimal control of stochastic hybrid systems, randomized algorithms, and nonlinear model identification. In 2018 he was the recipient of the Dimitris N. Chorafas Prize. In 2019 he received the IEEE CSS Italy Chapter Best Young Author Journal Paper Award.
\end{IEEEbiography}

\begin{IEEEbiography}[{\includegraphics[width=1in,height=1.25in,clip,keepaspectratio]{./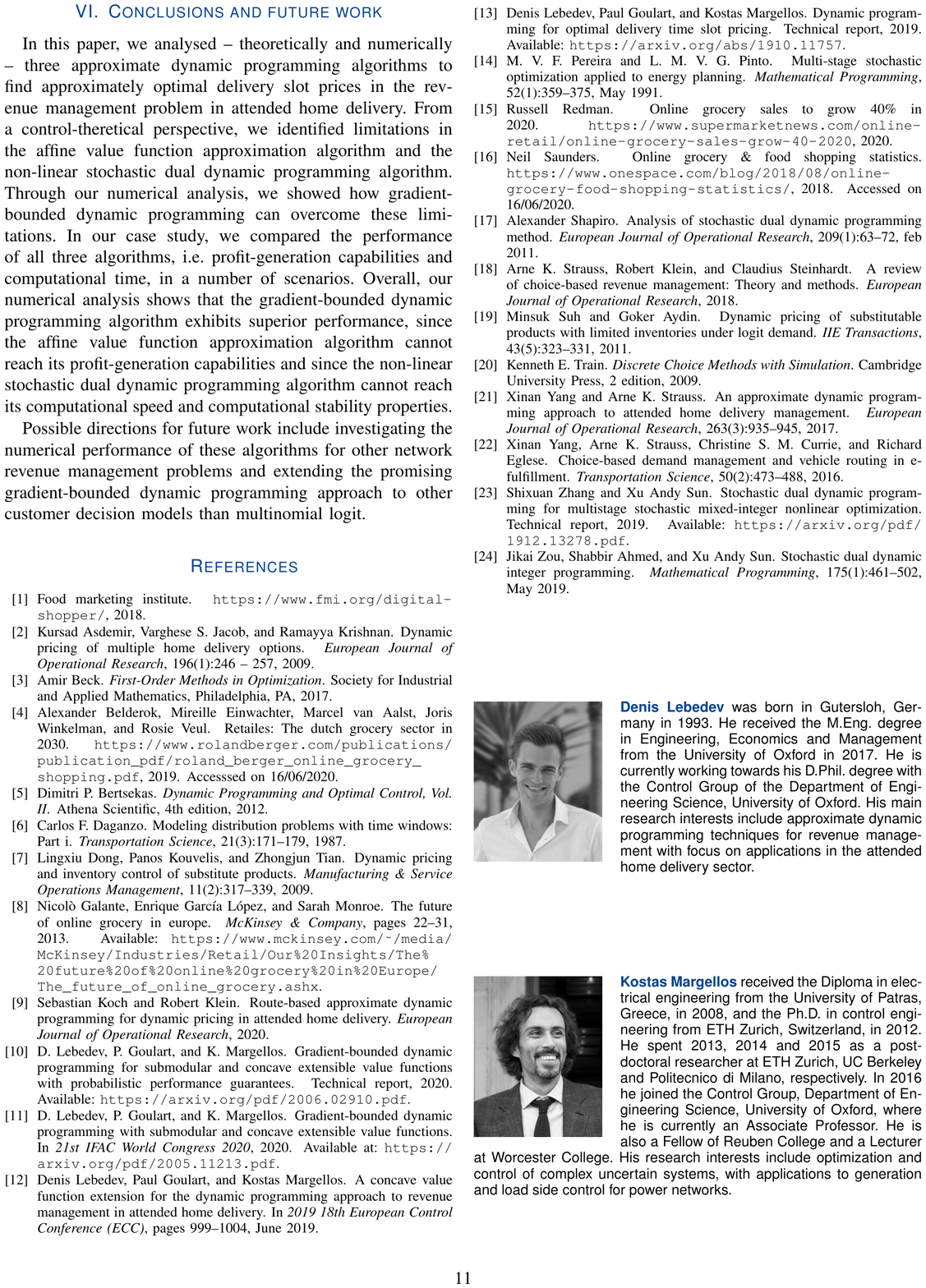}}]{Kostas~Margellos}
	received the Diploma in electrical engineering from the University of Patras, Greece, in 2008, and the Ph.D. in control engineering from ETH Zurich, Switzerland, in 2012. He spent 2013, 2014 and 2015 as a postdoctoral researcher at ETH Zurich, UC Berkeley and Politecnico di Milano, respectively. In 2016 he joined the Control Group, Department of Engineering Science, University of Oxford, where he is currently an Associate Professor. He is also a Fellow at Reuben College and a Lecturer at Worcester College. His research interests include optimization and control of complex uncertain systems, with applications to generation and load side control for power networks.
\end{IEEEbiography}

\begin{IEEEbiography}[{\includegraphics[width=1in,height=1.25in,clip,keepaspectratio]{./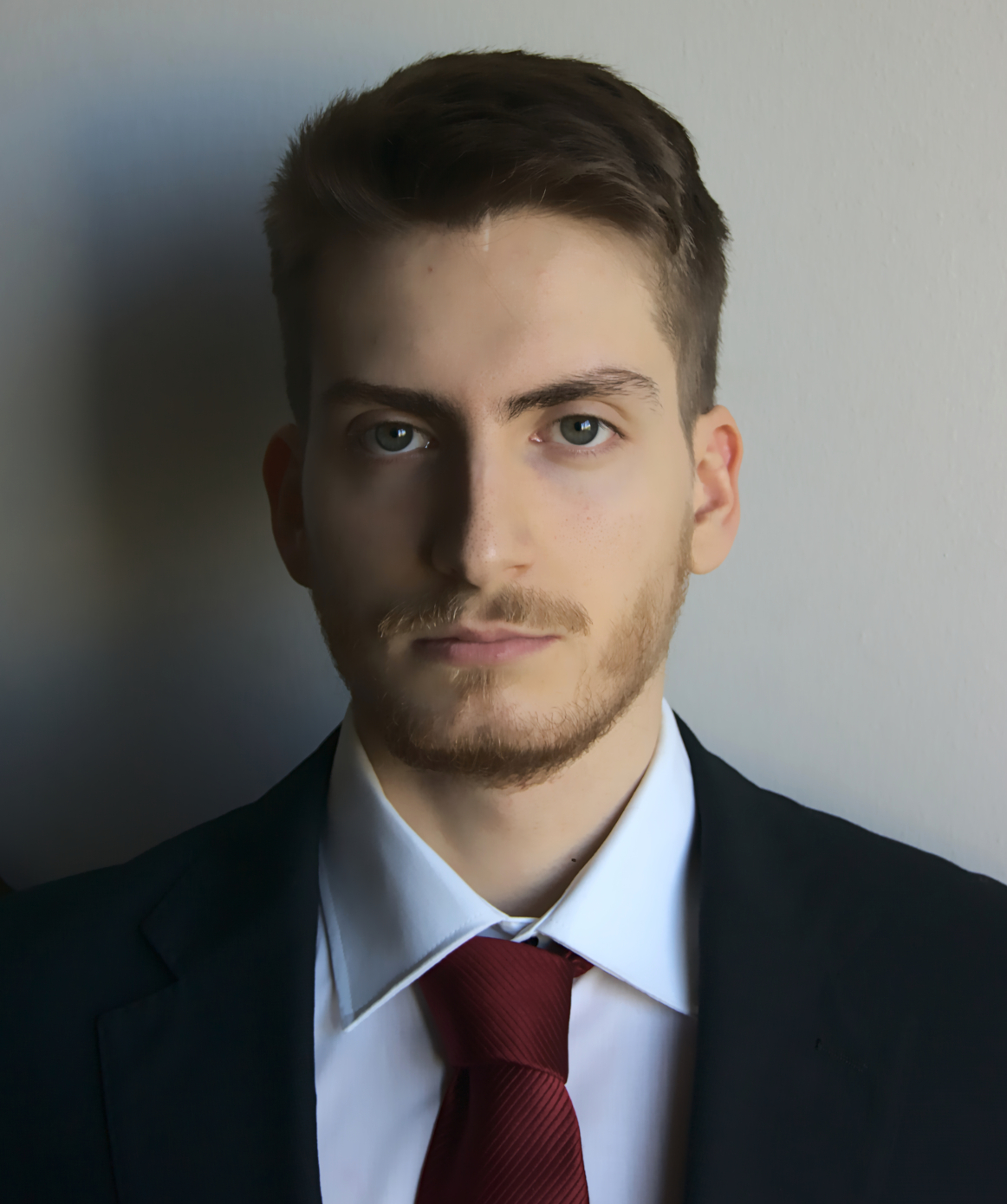}}]{Jacopo~Zizzo}
	received the Bachelor’s degree in 2017 and the Master’s degree in 2019, both in Automation and Control Engineering from Politecnico di Milano. During his Master’s studies his main interests were control theory, numerical optimization and data analysis. He developed his Master’s thesis work under the supervision of Prof. Simone Garatti, Prof. Maria Prandini and Prof. Alessandro Falsone, focusing on applications of the scenario approach theory to linear programming problems.
\end{IEEEbiography}

\begin{IEEEbiography}[{\includegraphics[width=1in,height=1.25in,clip,keepaspectratio]{./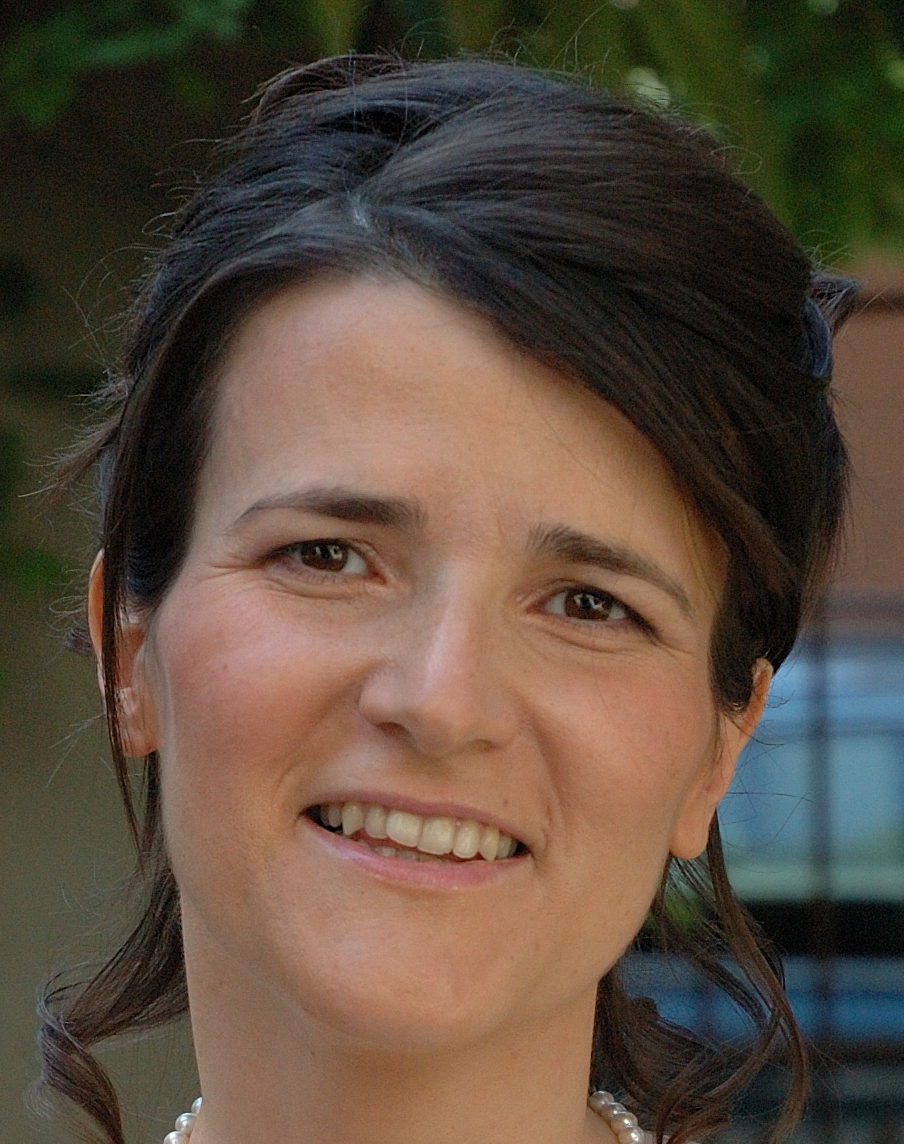}}]{Maria Prandini}
	received her Ph.D. degree in Information Technology in 1998. She was a postdoctoral researcher at UC Berkeley (1998-2000). She also held visiting positions at Delft University of Technology (1998), Cambridge University (2000), UC Berkeley (2005), and ETH Zurich (2006). In 2002, she became an assistant professor of automatic control at Politecnico di Milano, where she is currently a full professor. She was editor for the IEEE Control Systems Society (CSS) Electronic Publications (2013-15), elected member of the IEEE CSS Board of Governors (2015-17), and IEEE CSS Vice-President for Conference Activities (2016-17). She is currently IFAC Vice-President Conferences for the triennium 2020-23. She is program chair of IEEE Conference on Decision and Control 2021, and an associate editor of the IEEE Transactions on Network Systems and Automatica. In 2018, she received the IEEE CSS Distinguished Member Award. She was elevated to IEEE Fellow in 2020. Her research interests include stochastic hybrid systems, randomized algorithms, distributed and data-based optimization, multi-agent systems, and the application of control theory to transportation and energy systems.
\end{IEEEbiography}

\begin{IEEEbiography}[{\includegraphics[width=1in,height=1.25in,clip,keepaspectratio]{./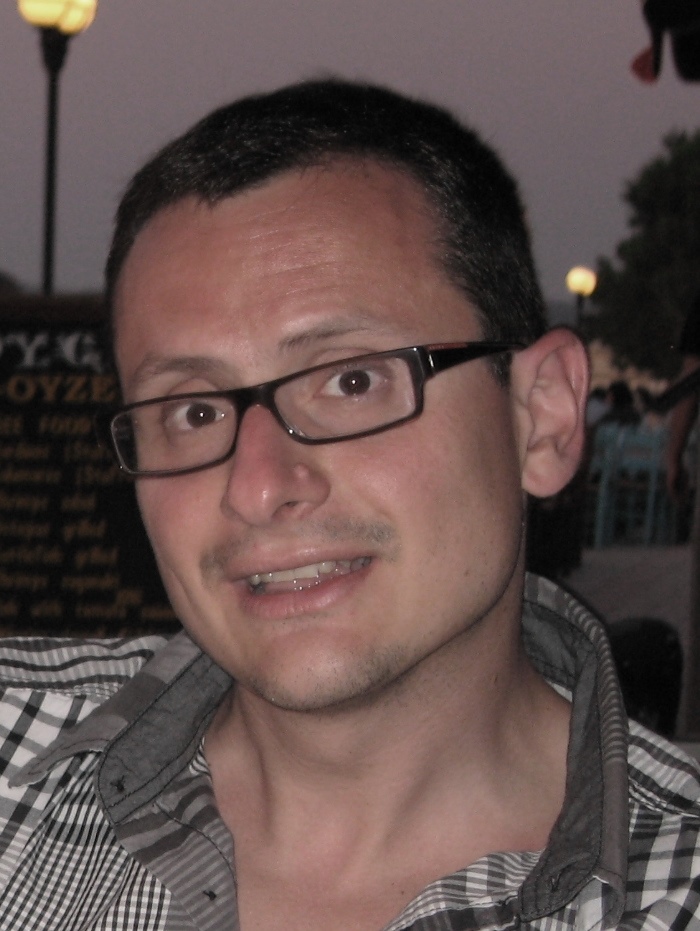}}]{Simone Garatti}
	is Associate Professor at the Dipartimento di Elettronica ed Informazione of the Politecnico di Milano, Milan, Italy. He received the Laurea degree and the Ph.D. in Information Technology Engineering in 2000 and 2004, respectively, both from the Politecnico di Milano. In 2003, he held a visiting position at the Lund University of Technology, in 2006 at the University of California San Diego (UCSD), in 2007 at the Massachusetts Institute of Technology (MIT), and in 2019 at the University of Oxford. From 2013 to 2019 he was member of the EUCA Conference Editorial Board, while he is currently associate editor of the International Journal of Adaptive Control and Signal Processing and member of the IEEE-CSS Conference Editorial Board. He is also member of the IEEE Technical Committees on Computational Aspects of Control System Design and on System Identification and Adaptive Control, and of the IFAC Technical Committee on Modeling, Identification and Signal Processing. His research interests include data-driven optimization and decision-making, stochastic optimization for problems in systems and control, system identification, model quality assessment, and uncertainty quantification.
\end{IEEEbiography}

\end{document}